\documentclass[10pt,a4paper]{article}
\usepackage[affil-it]{authblk}
\usepackage{amsmath,amsthm,amssymb,cite}
\usepackage{lineno,hyperref}
\usepackage{enumerate}
\usepackage[mathscr]{euscript}

\usepackage{bm}

%\usepackage{unicode-math}
%\setmathfont{xits-math.otf}

\usepackage{calrsfs}

\newcommand{\bTop}{\operatorname{\bf Top}}

 %\varphi$}}}

\newcommand{\bY}{\boldsymbol{Y}}
\newcommand{\bZ}{\boldsymbol{Z}}

\newcommand{\bX}{\operatorname{\bf X}}

\newcommand{\kw}{\operatorname{\mathcal{W}}}
\newcommand{\ka}{\operatorname{\mathcal{A}}}
\newcommand{\kb}{\operatorname{\mathcal{B}}}
\newcommand{\kv}{\operatorname{\mathcal{V}}}

\newcommand{\ku}{\operatorname{\mathcal{U}}}
\newcommand{\ki}{\operatorname{\mathcal{I}}}

\newcommand{\M}{\operatorname{\bf M}}

\newcommand{\Bo}{\operatorname{B_0}}
\newcommand{\Bci}{\operatorname{[B_4]}}
\newcommand{\bBo}{\operatorname{\bf B_0}}

\newcommand{\bq}{\operatorname{\bf q}}

\newcommand{\ANR}{\operatorname{ANR}}
\newcommand{\AR}{\operatorname{AR}}

\newcommand{\Ss}{\operatorname{SS}}
\newcommand{\rel}{\text{rel}}
\newcommand{\ANRF}{\operatorname{A(N)R}}
\newcommand{\ssh}{\operatorname{SSH}}
\newcommand{\bssh}{\operatorname{\bf SSH}}

\newcommand{\Ae}{\operatorname{AE}}
\newcommand{\ANE}{\operatorname{ANE}}
\newcommand{\ANEF}{\operatorname{A(N)E}}

\newcommand{\HS}{\operatorname{H}}

\newcommand{\bpro}{\operatorname{\bf pro}}
\newcommand{\F}{\operatorname{F}}
\newcommand{\bsh}{\operatorname{\bf SH}}
\newcommand{\SSS}{\operatorname{S}}

\newcommand{\kc}{\operatorname{\mathcal{C}}}
\newcommand{\bW}{\operatorname{\bf W}}
\newcommand{\bp}{\operatorname{\bf p}}
\newcommand{\br}{\operatorname{\bf r}}

\newcommand{\bff}{\operatorname{\bf f}}
\newcommand{\bgg}{\operatorname{\bf g}}
\newcommand{\C}{\operatorname{C}}
\newcommand{\E}{\operatorname{E}}
\newcommand{\HH}{\operatorname{H}}
\newcommand{\sh}{\operatorname{sh}}
\newcommand{\ssshh}{\operatorname{ssh}}
\newcommand{\pssh}{\operatorname{ssh}}
\newcommand{\SE}{\operatorname{SE}}
\newcommand{\const}{\operatorname{const}}

\usepackage{tikz}
\usetikzlibrary{arrows,chains,matrix,positioning,scopes}
\makeatletter
\tikzset{join/.code=\tikzset{after node path={%
\ifx\tikzchainprevious\pgfutil@empty\else(\tikzchainprevious)%
edge[every join]#1(\tikzchaincurrent)\fi}}}
\makeatother
\tikzset{>=stealth',every on chain/.append style={join},
         every join/.style={->}}

\newtheorem{theorem}{Theorem}[section]
\newtheorem{lemma}[theorem]{Lemma}
\newtheorem{corollary}[theorem]{Corollary}
\theoremstyle{definition}
\newtheorem{remark}[theorem]{Remark}
\newtheorem{Definition}[theorem]{Definition}

\newtheorem{proposition}[theorem]{Proposition}

\title{Fiber Strong Shape Theory for Topological Spaces{\footnote{The authors supported in part by grant FR/233/5-103/14 from Shota Rustaveli National Science Foundation (SRNSF)}}}
\date{\vspace{-5ex}}

\author{Vladimer Baladze and Ruslan Tsinaridze}
  %\thanks{ \texttt{}}}
\affil{Department of Mathematics of\\ Shota Rustaveli Batumi State University}
\providecommand{\keywords}[1]{\textbf{\textit{Keywords and Phrases:}} #1}
\begin{document}
\maketitle

\setcounter{section}{-1}

\begin{abstract}
In the paper we construct and develop a fiber strong shape theory for arbitrary spaces over fixed metrizable space $\Bo$. Our approach is based on the method of Marde\v{s}i\'{c}-Lisica and instead of resolutions, introduced by Marde\v{s}i\'{c}, their fiber preserving analogues are used. The fiber strong shape theory yields the classification of spaces over $\Bo$ which is coarser than the classification of spaces over $\Bo$ induced by fiber homotopy theory, but is finer than the classification of spaces over $\Bo$ given by usual fiber shape theory.
\end{abstract}

\textbf{Math. Sub. Class.:}54C55, 54C56, 55P55.

\keywords{Fiber shape, Fiber homotopy, Fiber resolution, Fiber shape expansion, Fiber strong expansion, $\ANRF_{\Bo}$-space, $\ANEF_{\Bo}$-space.}

\section{Introduction}
Together with the classical shape theory and its various versions  there exists an important branch of modern geometric topology, the so called strong shape theory, which besides the applications in topology (general topology, algebraic topology, geometric topology) \cite{80}, has also applications in other branches of mathematics (dynamical systems, C$^*$-algebras)(\cite{64},\cite{45}). 

Strong shape theory for different categories of spaces was investigated by several authors. For the category of compact metric spaces equivalent strong shape theories were introduced by F.W.Bauer \cite{25}, A. Calder and H.M.Hastings \cite{38}, F.W.Cathey (\cite{32},\cite{33}), J.Dydak and J.Segal \cite{54}, D.A.Edwards and H.M.Hastings \cite{56}, Y.Kodama and J.Ono \cite{73}, Yu.T.Lisica \cite{77} and J.B.Quigly \cite{96}.

Strong shape theory for the category of general topological spaces and arbitrary categories was constructed by M. Batanin \cite{24}, F.W. Bauer \cite{25}, J.Dydak and S.Nowak (\cite{52}, \cite{53}), Yu.T.Lisica \cite{76}, Yu.T.Lisica and S.Marde\v{s}ic' \cite{78}, Z.Miminoshvili \cite{85} and L. Stramaccia \cite{102}. 

For the present period of the shape theory development it is characteristic to design and research different versions of strong shape theory.

Strong shape theory based on the notion of equivariant homotopy constructed by V.Baladze \cite{8} for metric $G$-spaces and A.Bykov and M.Texis for compact metric $G$-spaces \cite{31}.

Strong shape theory based on the notion of $n$-homotopy was developed by Y.Iwamoto and K.Sakai \cite{66}. 

The problem of construction of strong shape theory for fiberwise topology is one of the interesting problems because fiberwise topology occupies a central place in topology today. As the strong shape theory arises from homotopy theory, so fiber strong shape theory arises from fiberwise homotopy theory.

To develop the fiber strong shape theory is natural. It is hoped that this may stimulate further research of fiberwise topology, in particular, of fiberwise homotopy theory.

The main aim of the paper is to develop fiber strong shape theory for arbitrary spaces over fixed metrizable space $\Bo$. Marde\v{s}i\'{c}-Lisica approach (\cite{78},\cite{80}) to strong shape theory can in fact be extended to the fiber version. The construction of fiber strong shape category is based on the general method developed by Ju.T.Lisica and Marde\v{s}i\'{c} (\cite{78},\cite{80} and uses the notions of fiber $\ANR_{\Bo}$-resolutions and fiber strong $\ANR_{\Bo}$-expansions.

Fiber strong expansions of spaces over $\Bo$ are morphisms of category $\bpro-\bTop_{\bBo}$ from spaces over $\Bo$ to inverse systems of spaces over $\Bo$, which satisfy a stronger version of fiber homotopy conditions of $\ANR_{\Bo}$-expansion defined by V.Baladze (\cite{11}-\cite{16}).

In the paper it is proved that fiber resolutions of spaces over $\Bo$ induce fiber strong expansions of spaces over $\Bo$. In order to construct the fiber strong shape category $\ssh_{\Bo}$ we use this result. 

Besides, here also are proved that there exist fiber strong shape functor $\Ss:\HS(\bTop_{\bBo})\to \bssh_{\bBo}$ and functor $\F:\bssh_{\Bo}\to \bsh_{\bBo}$ such that $\F \circ \Ss=\SSS$, where $\SSS:\HS(\bTop_{\bBo})\to \bsh_{\bBo}$ is fiber shape functor.

Now give some notation and preliminaries.

We use the following notations. Let $\Bo$ denote the fixed space. The space $X$ over $\Bo$ is a pair consisting of a topological space $X$ and a continuous mapping $\pi_X:X\to \Bo$. Let $X$ and $Y$ be spaces over $\Bo$. A continuous map $f:X\to Y$ is said to be a fiber preserving (f.p.) if $\pi_Y ~ f=\pi_X$. By $\bTop_{\bBo}$ we denote the category of all spaces over $\Bo$ and all f.p. maps.

Two f.p. maps $f,g:X\to Y$ of the category $\bTop_{\bBo}$ are said to the fiber preserving (f.p.) homotopic, $f \mathop {\simeq}\limits_{\Bo} g$, if there is a homotopy $H:X\times I\to Y$ from $f$ to $g$, such that $\pi_Y ~ H=\pi_{X\times I}$, where $\pi_{X\times I}(x,t)= \pi_{X}(x)$ for every $t\in I$ and $ x \in X$. The homotopy $H$ is called fiber preserving homotopy or homotopy over $\Bo$. The relation $\mathop {\simeq}\limits_{\Bo}$ is an equivalence relation. We denote by $[f]_{\Bo}$ the homotopy class of the f.p. map f. The relation $\mathop {\simeq}\limits_{\Bo}$ is compatible with the composition. Therefore, one can define the composition of classes $[f]_{\Bo}:X\to Y$ and $[g]_{\Bo}:Y\to Z$ by composing representatives: 

\[
[g]_{\Bo} ~ [f]_{\Bo}=[g ~ f]_{\Bo}.
\]

$\HH(\bTop_{\bBo})$ denotes homotopy category of $\bTop_{\bBo}$. Its objects are all objects of $\bTop_{\bBo}$ and the morphisms are equivalence classes with respect to relation $\mathop {\simeq}\limits_{\Bo}$ of morphisms in $\bTop_{\bBo}$. Two spaces over $\Bo$, X and Y are said to be fiber homotopy equivalent, $X\mathop {\simeq}\limits_{\Bo} Y$, if there exist two f. p. maps $f:X\to Y$ and $g:Y\to X$ such that $g ~ f \mathop {\simeq}\limits_{\Bo} 1_X$ and $f ~ g \mathop {\simeq}\limits_{\Bo} 1_Y$.

We denote by $C(I,X)$ the space of all continuous maps $\varphi: I\to X$ endowed with the compact-open topology. $C_{\Bo}(I,X)$ denotes the subspace of $C(I,X)$ consisting of all continuous maps $\varphi:I\to X$ such that $\pi_X ~ \varphi=\const$.

Let $\Bo$ be a fixed metrizable space and $\M_{\Bo}$ the category of all metrizable spaces over $\Bo$ and all f.p. maps.

Let $X$ be a metrizable space over $\Bo$ and let $Y$ be a subspace of $X$. A f.p. map $r:X\to Y$ is called a fiberwise retraction if $r ~ i=1_Y$, where $i:Y \to Y$ is the f.p. inclusion map. In this case the subspace Y is called a fiber retract of $X$.

A subspace $Y$ of metrizable space $X$ over $\Bo$ is called a fibrewise neighbourhood retract of $X$ if there exist a neighbourhood $U$ of $Y$ in $X$ and a fibrewise retraction $r:U\to Y$.

The space $Y\in \M_{\Bo}$ is an absolute retract over $\Bo$, $Y\in \AR_{\Bo}$ (an absolute neighbourhood retract over $\Bo$, $Y\in \ANR_{\Bo}$), if $Y$ has the following property: for any closed f.p. embedding $i:Y\to X \in \M_{\Bo}$ there exists fibrewise retraction $r:X\to i(Y)$ (there exist a neighbourhood $U$ of $i(Y)$ in $X$ and a fibrewise retraction $r:U\to i(Y)$).

The space $Y \in \M_{\Bo}$ is an absolute extensor over $\Bo$, $Y \in \Ae_{\Bo}$ (an absolute neighbourhood extensor over $\Bo$, $Y \in \ANE_{\Bo}$), if it has the following property: for any space $X \in \M_{\Bo}$ and any closed subset $A \subseteq X$, every f.p. map $f:X\to Y$ admits a f.p. extension $\underline{f}:X\to Y$ ($\underline{f}:U\to Y$, where $U$ is a neighbourhood of $A$ in $X$) (see \cite{91}).

\begin{proposition}\label{proposition 0.1}
\textit{ A space $Y$ over $\Bo$ is an $\ANR_{\Bo}$-space if and only if $Y$ is an $\ANE_{\Bo}$-space} \cite{106}.
\end{proposition}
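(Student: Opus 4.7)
The plan is to adapt the classical Dugundji-style argument (ANR $=$ ANE for metrizable spaces) to the fibrewise setting, treating the two implications separately via the standard adjunction-space/embedding technique.

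For the implication $\ANR_{\Bo}\Rightarrow \ANE_{\Bo}$, I would start with $Y\in \ANR_{\Bo}$, a space $X\in \M_{\Bo}$, a closed subset $A\subseteq X$, and a f.p. map $f:A\to Y$. The key construction is the fibrewise adjunction space $Z=X\cup_{f}Y$, equipped with its natural projection to $\Bo$ induced by $\pi_{X}$ and $\pi_{Y}$. The first step is to check that $Z$ lies in $\M_{\Bo}$: metrizability of the adjunction of two metrizable spaces along a closed map is standard, and the projection to $\Bo$ is continuous because $f$ is fibre preserving. Since $A$ is closed in $X$, the canonical image of $Y$ in $Z$ is a closed f.p. embedding. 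Applying the $\ANR_{\Bo}$-property of $Y$ to this embedding yields a neighbourhood $V$ of (the image of) $Y$ in $Z$ and a fibrewise retraction $r:V\to Y$. Pulling $V$ back through the quotient map $X\sqcup Y\to Z$ gives a neighbourhood $U$ of $A$ in $X$, and $r$ composed with the quotient map restricted to $U$ supplies the desired f.p. extension $\underline{f}:U\to Y$.

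For the converse $\ANE_{\Bo}\Rightarrow \ANR_{\Bo}$, I would take $Y\in \ANE_{\Bo}$ together with an arbitrary closed f.p. embedding $i:Y\to X\in \M_{\Bo}$. Identify $Y$ with the closed subset $i(Y)\subseteq X$ and consider the f.p. map $1_{Y}\circ i^{-1}:i(Y)\to Y$; this is fibre preserving since $\pi_{Y}=\pi_{X}\circ i^{-1}$ on $i(Y)$. The $\ANE_{\Bo}$-property applied to the closed inclusion $i(Y)\hookrightarrow X$ and to this map produces a neighbourhood $U$ of $i(Y)$ in $X$ and a f.p. extension $r:U\to Y$, which is automatically a fibrewise retraction.

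The routine calculations are the compatibility of the composed maps with $\pi_{\Bo}$, which are immediate from fibre-preservation of the data. The single genuine obstacle is the first step of the forward direction: one has to know that the adjunction space $X\cup_{f}Y$, formed along a closed f.p. map between metrizable spaces over $\Bo$, is again metrizable and that $Y$ survives as a closed subspace. Once this fibrewise version of the Hanner/Dowker adjunction lemma is in hand (which is precisely the content invoked from \cite{106}), both implications reduce to the classical template.
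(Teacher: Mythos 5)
The paper does not actually prove Proposition \ref{proposition 0.1}: it is quoted as a known result of Yagasaki \cite{106}, so there is no internal argument to compare yours against. Your outline is the standard Hanner--Dugundji adjunction-space proof transported to $\bTop_{\bBo}$, and it is sound; the one load-bearing input is the metrizability of $X\cup_{f}Y$ for metrizable $X,Y$ and closed $A$, which you correctly isolate (though note that \cite{106} is cited for the full proposition, not merely for that adjunction lemma). Two cosmetic points: in the converse direction the extension $r:U\to Y$ should be post-composed with $i$ to yield a retraction $U\to i(Y)$, which is what the paper's definition of $\ANR_{\Bo}$ literally asks for, and in the forward direction one should record that the projection $\pi_{Z}:Z\to\Bo$ exists by the universal property of the quotient precisely because $f$ is fibre preserving --- a point you mention and which is indeed immediate.
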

\begin{proposition} \label{proposition 0.2}
\textit{For every metrizable space $X$ over $\Bo$ there exist an $\ANR_{\Bo}$-space $M$ with weight
\[
w(M) \leq max\{w(X),w(\Bo),\aleph_{0} \}
\]
and a f.p. embedding $i:X\to M$ such that $i(X)$ is closed in $M$}(\cite{12},\cite{16}). 
\end{proposition}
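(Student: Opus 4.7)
The plan is to reduce the fiberwise embedding problem to the classical (non-fiberwise) closed embedding theorem for metrizable spaces, exploiting the fact that the fiber map $\pi_{X}$ provides, for free, a first coordinate for a graph-type embedding whose second coordinate comes from a standard Banach-space embedding.

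\textbf{Construction.} By the classical Arens--Eells (or Kuratowski / Wojdys{\l}awski) theorem, there is a closed isometric embedding $\iota \colon X \hookrightarrow V$ of $X$ into a Banach space $V$ with $w(V) \le \max\{w(X),\aleph_{0}\}$; being a convex subset of itself, $V$ is a classical $\AR$, hence an absolute extensor for metrizable spaces. I would set $M = \Bo \times V$, viewed as a space over $\Bo$ via the projection $\pi_{M}$ onto the first factor, and define
\[
  i \colon X \to M, \qquad i(x) = (\pi_{X}(x),\iota(x)).
\]
Then $\pi_{M}\circ i = \pi_{X}$, so $i$ is f.p., and $i$ inherits from $\iota$ the property of being a topological embedding. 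Closedness of $i(X)$ in $M$ follows from closedness of $\iota(X)$ in $V$ together with continuity of $\pi_{X}$: if $i(x_{n})\to (b,v)$, closedness of $\iota(X)$ gives $v=\iota(x)$ for a unique $x\in X$, the embedding property of $\iota$ forces $x_{n}\to x$, and continuity of $\pi_{X}$ then yields $b=\pi_{X}(x)$. The weight bound is automatic: $w(M)\le \max\{w(\Bo),w(V)\}\le \max\{w(X),w(\Bo),\aleph_{0}\}$.

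\textbf{Why $M$ is $\ANR_{\Bo}$.} This is the real content, and I would handle it via Proposition \ref{proposition 0.1}, reducing to the task of showing $M\in\ANE_{\Bo}$. Given any $X'\in \M_{\Bo}$, a closed subset $A\subseteq X'$, and a f.p. map $f\colon A\to M$, one writes $f(a)=(\pi_{X'}(a),g(a))$ for some continuous $g\colon A\to V$. Since $V$ is a classical absolute extensor and $X'$ is metrizable, $g$ extends continuously to $\bar g\colon X'\to V$; the map $x'\mapsto(\pi_{X'}(x'),\bar g(x'))$ is then a fiber-preserving extension of $f$ to all of $X'$. Hence $M$ is in fact $\Ae_{\Bo}$, and in particular $\ANR_{\Bo}$.

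\textbf{Main obstacle.} The whole argument pivots on the claim that $\Bo\times V$ belongs to $\ANR_{\Bo}$, which one cannot verify directly from the retraction definition. It becomes a one-line consequence of the $\ANR_{\Bo}\Leftrightarrow \ANE_{\Bo}$ equivalence of Proposition \ref{proposition 0.1}, which is precisely what decouples the fiberwise extension problem into its base component (trivially solved by $\pi_{X'}$) and its fiber component (solved by the classical $\AR$ property of $V$). With that equivalence in hand, the remainder of the proof is bookkeeping.
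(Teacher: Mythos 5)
Your argument is correct, but note that the paper does not actually prove Proposition \ref{proposition 0.2}: it is quoted from \cite{12} and \cite{16}, so there is no in-text proof to compare against. Your route is nonetheless the standard one and is exactly the normal form the paper itself relies on — the proof of Proposition \ref{proposition 0.3} opens by assuming $(Y,\pi_Y)$ sits as a closed subset of $\Bo\times K$ with $K$ convex in a normed space, which is precisely the model $M=\Bo\times V$, $i=(\pi_X,\iota)$ that you construct. The individual steps all check out: the graph map $i$ is an embedding because $\iota$ is and the second-coordinate projection recovers $\iota^{-1}$; closedness of $i(X)$ follows by your sequential argument (legitimate here since $\Bo$ is metrizable, hence so is $\Bo\times V$); the weight estimate is immediate; and the reduction of the $\ANE_{\Bo}$ property to Dugundji's extension theorem for the $V$-coordinate, with the base coordinate handled tautologically by $\pi_{X'}$, is exactly the right decoupling — you in fact prove the stronger statement $M\in\Ae_{\Bo}$. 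Two small points worth making explicit: you should say why a f.p.\ map $f\colon A\to \Bo\times V$ necessarily has first coordinate $\pi_{X'}|_A$ (it is because $\pi_M$ is the first-factor projection), and you are implicitly using the Arens--Eells form of the embedding theorem (closed image in the normed space with controlled density) rather than the Kuratowski--Wojdys{\l}awski form, whose image is closed only in its convex hull; citing the former, as you do, is the right choice.
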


Let $\mathscr{U}=\{U_{\alpha}\}_{\alpha\in \ka}$ be a covering of a space $Y$. We say that the maps $f,g:X\to Y$ are $\mathscr{U}$-near, if for every $x \in X$ there exists a $ U_{\alpha}\in \mathscr{U}$ such that, $f(x),g(x) \in U_{\alpha}$. We say that a homotopy $H:X\times I\to Y$ which connects $f$ and $g$, is a $\mathscr{U}$-homotopy if for every $x\in X$ there exists a $U_{\alpha}\in \mathscr{U}$ such that $H(x,t)\subseteq U_{\alpha}$ for all $t\in I$.

\begin{proposition} \label{proposition 0.3}
(Comp. \cite{12},Proposition 7) Let $(Y,\pi_{Y})$ be a ${\rm {ANR}}_{{\rm B_0} }$.Then every open covering $\mathscr{U}$ of $(Y,\pi_{Y})$ admits an open covering $\mathscr{V}$ of $Y$ such that, whenever any two f.p. maps $f,g:(X,\pi_{X})\to (Y,\pi_{Y})$ from an arbitrary space $(X,\pi_{X})$ over $\rm B_0 $ into the space $(Y,\pi_{Y})$ over $\rm B_0 $ are $\mathscr{V}$-near, then there exists f.p. $\mathscr{U}$-homotopy $H:(X\times I,\pi_{X\times I})\to (Y,\pi_{Y})$ which connects $f$ and $g$. Moreover, if for a subset $A\subseteq X$, $f_{|A}=g_{|A}$, then $H$ is f.p. homotopy ${\rm {rel}} A$.
\end{proposition}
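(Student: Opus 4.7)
The plan is to mirror the classical $\ANE$-homotopy lemma (Hu, \emph{Theory of Retracts}, V.1.1) in the fiberwise setting, replacing the product $Y\times Y$ by the fiber product $Y\times_{\Bo} Y=\{(y_1,y_2)\in Y\times Y:\pi_Y(y_1)=\pi_Y(y_2)\}$, which is a metrizable space over $\Bo$ via $(y_1,y_2)\mapsto \pi_Y(y_1)$. Put $N=\bigcup_{U\in\mathscr{U}}(U\times_{\Bo} U)$; this is open in $Y\times_{\Bo} Y$ and contains the diagonal $\Delta_Y$. Form $M=N\times I$, which is metrizable and a space over $\Bo$ via $(y_1,y_2,t)\mapsto\pi_Y(y_1)$, and single out the closed subset
\[
C=(N\times\{0,1\})\cup(\Delta_Y\times I)\subset M.
\]
Define $\psi:C\to Y$ by $\psi(y_1,y_2,0)=y_1$, $\psi(y_1,y_2,1)=y_2$, $\psi(y,y,t)=y$; the three rules agree on overlaps, so $\psi$ is continuous and fiber preserving over $\Bo$.

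Since $Y\in\ANR_{\Bo}$, Proposition \ref{proposition 0.1} gives $Y\in\ANE_{\Bo}$, so $\psi$ admits a fiber preserving extension $\widetilde{\psi}:V\to Y$ on some open neighbourhood $V$ of $C$ in $M$. The heart of the argument is then a tube-lemma style shrinking. For each $y\in Y$ fix $U_y\in\mathscr{U}$ with $y\in U_y$. Since $\widetilde{\psi}$ is continuous on $V$, the set $\widetilde{\psi}^{-1}(U_y)$ is an open neighbourhood of $\{(y,y)\}\times I$ in $V$; by compactness of $I$ I can find an open neighbourhood $V_y$ of $y$ in $Y$ with
\[
V_y\times_{\Bo} V_y\times I\subset V\cap \widetilde{\psi}^{-1}(U_y).
\]
Set $\mathscr{V}=\{V_y\}_{y\in Y}$; this is the desired open cover of $Y$.

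Given two f.p.\ maps $f,g:(X,\pi_X)\to(Y,\pi_Y)$ that are $\mathscr{V}$-near, the fiber condition $\pi_Y f=\pi_Y g=\pi_X$ guarantees $(f(x),g(x))\in Y\times_{\Bo} Y$ for every $x$, and $\mathscr{V}$-nearness provides $y(x)\in Y$ with $(f(x),g(x))\in V_{y(x)}\times_{\Bo} V_{y(x)}$. Define
\[
H(x,t)=\widetilde{\psi}(f(x),g(x),t),\qquad (x,t)\in X\times I.
\]
Continuity of $H$ follows from continuity of $\widetilde{\psi}$, the boundary conditions on $C$ give $H(\cdot,0)=f$ and $H(\cdot,1)=g$, and $H$ is fiber preserving since $\pi_Y\widetilde{\psi}=\tilde p$ on $V$. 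The inclusion $\widetilde{\psi}(V_{y(x)}\times_{\Bo} V_{y(x)}\times I)\subset U_{y(x)}$ turns $H$ into a $\mathscr{U}$-homotopy. Finally, if $f|_A=g|_A$ then for $a\in A$ we have $(f(a),g(a))=(f(a),f(a))\in\Delta_Y$, so $H(a,t)=\widetilde{\psi}(f(a),f(a),t)=f(a)$ for all $t\in I$, which is exactly the relative condition.

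The main technical obstacle is the tube-lemma step: one must produce the neighbourhoods $V_y$ simultaneously in $Y\times_{\Bo}Y$ and inside $\widetilde{\psi}^{-1}(U_y)$, uniformly over the compact parameter $t\in I$. Everything else is a routine transcription of the classical argument, with the standing care that all auxiliary constructions ($N$, $M$, $C$, $\widetilde{\psi}$, $H$) respect the projections to $\Bo$, which is automatic because $\Delta_Y\subset Y\times_{\Bo} Y$ and the defining formulas for $\psi$ use only coordinates whose projections to $\Bo$ coincide.
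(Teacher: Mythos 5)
Your proof is correct, but it follows a genuinely different route from the paper's. The paper embeds $(Y,\pi_Y)$ as a closed subspace of $\Bo\times K$ with $K$ convex in a normed space, takes a fibrewise retraction $r:G\to Y$ of a neighbourhood, lets $\mathscr{V}$ be the trace on $Y$ of a refinement of $r^{-1}(\mathscr{U})$ by basic open sets $O_\alpha\times Q_\alpha$ with $Q_\alpha$ convex, and defines $H$ by pushing the straight-line homotopy $(1-t)\pi(f(x))+t\pi(g(x))$ down through $r$; convexity of the $Q_\alpha$ is what makes $H$ a $\mathscr{U}$-homotopy, and the rel $A$ clause is immediate from the formula. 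You instead work intrinsically on the fiber product $Y\times_{\Bo}Y$: you extend the tautological partial homotopy $\psi$ on $(N\times\{0,1\})\cup(\Delta_Y\times I)$ using only $Y\in\ANE_{\Bo}$ (Proposition \ref{proposition 0.1}), and recover the cover $\mathscr{V}$ by a tube-lemma shrinking around the diagonal. Your version avoids the fiberwise convex-embedding step entirely (which the paper invokes without proof as ``we may assume''), at the cost of the extension-plus-tube-lemma bookkeeping; the paper's version is more concrete and produces $\mathscr{V}$ and $H$ by explicit formulas. Both arguments correctly deliver the $\mathscr{U}$-control and the rel $A$ statement --- in your case because $(f(a),g(a),t)\in\Delta_Y\times I$ lies in the set where $\widetilde{\psi}$ agrees with $\psi$. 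Two small points worth tightening: justify that a neighbourhood $W$ of $(y,y)$ in $Y\times_{\Bo}Y$ contains a set of the form $V_y\times_{\Bo}V_y$ (take $V_y=A\cap B$ for a basic box $A\times B$), and replace the undefined symbol $\tilde p$ by the projection of $M$ onto $\Bo$.
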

\begin{proof} We may assume that $(Y,\pi_{Y})$ is a closed subset of space ${\rm B_0} \times K$, where $K$ is a convex set of normed vector space $L$. Let $\pi: {\rm B_0} \times K\to K$ be the map given by the formula $\pi(b,k)=k$ for every $(b,k)\in {\rm B_0} \times K$. Since $(Y,\pi_{Y})$ is an ${\rm {ANR}}_{{\rm B_0} }$, there is an open neighbourhood $(G,\pi_{G})$ of $(Y,\pi_{Y})$ in ${\rm B_0} \times K $ together with a fibrewise retraction $r:(G,\pi_{G})\to (Y,\pi_{Y})$. Let $\{O_{\alpha}\times Q_{\alpha}\}_{\alpha\in \ka}$ be a refinement of $r^{-1}(\mathscr{U})$, where $Q_{\alpha}$ is convex for every $\alpha \in \ka$. Then $\mathscr{V}=\{(O_{\alpha}\times Q_{\alpha})\cap Y\}_{\alpha\in \ka}$ is an open refinement of the covering $\mathscr{U}$. For any two $\mathscr{V}$-near f.p. maps $f,g:(X,\pi_{X})\to (Y,\pi_{Y}) \subseteq {\rm B_0} \times K $ we can define a f.p. homotopy $H^{‘}:(X\times I,\pi_{X\times I}) \to K$ by formula
\[
H^{'}(x,t)=(\pi_X(x),(1-t)\pi(f(x))+t\pi(g(x)),~~~~~~~(x,t)\in X\times I.
\]

Define a f.p. map $H:(X\times I,\pi_{X\times I})\to (Y,\pi_{Y})$ by taking
\[
H(x,t)=r(H{'}(x,t)),~~~~~~(x,t)\in X\times I.
\]
Clearly, we have $H_0=f$, $H_1=g$ and $H$ is a $\mathscr{U}$-homotopy. Obviously, if $f(x)=g(x)$, for each $x\in A$, then $H(x,t)=f(x)=g(x)$ for every $t\in I$.
\end{proof}

T. Yagasaki \cite{106} showed the following proposition.

\begin{proposition} \label{proposition 0.4}
\textit{Let $Y\in \ANR_{\Bo}$. Let $A$ be a closed subspace of a metrilzable space $X$ over $\Bo$. Let $f,g:X\to Y$ be f.p. maps and let $H:A\times I\to Y$ be f.p. maps and let $H:A\times I\to Y$ be a homotopy over $\Bo$ from $f_{|A}$ to $g_{|A}$. Then there exists a neighbourhood $U$ of $A$ in $X$ and homotopy $\tilde{H}:U\times I\to Y$ over $\Bo$ from $g_{|U}$ to $f_{|U}$ such that $\tilde{H}_{|A\times I}=H$.}
\end{proposition}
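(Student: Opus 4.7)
The plan is to encode the data $(f, g, H)$ as a single f.p.\ map on a closed subset of $X \times I$, extend it to a neighborhood using the $\ANE_{\Bo}$ characterization of $Y$ from Proposition~\ref{proposition 0.1}, and then cut out a product neighborhood of $A \times I$ by a tube-lemma argument. First, I would view $X \times I$ as a metrizable space over $\Bo$ with projection $\pi_{X \times I}(x, t) = \pi_X(x)$, and set $Z = (A \times I) \cup (X \times \{0, 1\})$; since $A$ is closed in $X$, $Z$ is closed in $X \times I$. Define $\Phi : Z \to Y$ by $\Phi(x, 0) = f(x)$, $\Phi(x, 1) = g(x)$, and $\Phi(a, t) = H(a, t)$. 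Consistency on the overlaps $A \times \{0\}$ and $A \times \{1\}$ is exactly the statement that $H$ is a homotopy from $f|_A$ to $g|_A$; continuity follows from the pasting lemma, and the fiber-preserving condition is immediate from the f.p.\ properties of $f$, $g$, and $H$.

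Next, Proposition~\ref{proposition 0.1} gives $Y \in \ANE_{\Bo}$, so $\Phi$ extends to a f.p.\ map $\tilde{\Phi}: W \to Y$ defined on some open neighborhood $W$ of $Z$ in $X \times I$. To convert $W$ into a product neighborhood, I would invoke the tube lemma: for each $a \in A$ the compact slice $\{a\} \times I$ lies in the open set $W$, so there exists an open neighborhood $V_a$ of $a$ in $X$ with $V_a \times I \subseteq W$; setting $U = \bigcup_{a \in A} V_a$ yields an open neighborhood of $A$ satisfying $U \times I \subseteq W$. Then $\tilde{H} := \tilde{\Phi}|_{U \times I}$ is an f.p.\ homotopy over $\Bo$ with $\tilde{H}_0 = f|_U$, $\tilde{H}_1 = g|_U$, and $\tilde{H}|_{A \times I} = H$, as required.

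No step presents a genuine obstacle; the essential ingredient is the fiberwise extension property provided by Proposition~\ref{proposition 0.1}, and the remaining pieces are a pasting-lemma assembly of $\Phi$, the tube-lemma shrinking to a product neighborhood, and a direct check of fiber preservation. The only minor care point is verifying that the projections of $A \times I$, $X \times \{0\}$, and $X \times \{1\}$ into $\Bo$ are all compatible with the product projection $\pi_{X \times I}$, so that $\Phi$ is a morphism in $\M_{\Bo}$ and the $\ANE_{\Bo}$-extension of Proposition~\ref{proposition 0.1} applies; this is immediate from the definition of $\pi_{X \times I}$.
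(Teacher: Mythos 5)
Your argument is correct. The paper itself gives no proof of this proposition --- it is quoted from Yagasaki's \emph{Fiber shape theory} --- so there is nothing internal to compare against; your write-up is exactly the standard Borsuk homotopy extension argument transplanted to the fiberwise setting: paste $f$, $g$, $H$ into a single f.p.\ map on the closed set $(A\times I)\cup(X\times\{0,1\})$ of the metrizable space $X\times I$ over $\Bo$, extend to an open neighbourhood via $Y\in\ANE_{\Bo}$ (Proposition~\ref{proposition 0.1}), and shrink to a tube $U\times I$ with the tube lemma. All the checks you flag (consistency on the overlap, pasting-lemma continuity, fiber preservation of $\Phi$, and that $X\times I\in\M_{\Bo}$ so the $\ANE_{\Bo}$ property applies) go through as you say. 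The one cosmetic point worth noting is that the statement as printed says the resulting homotopy runs ``from $g_{|U}$ to $f_{|U}$'' while $H$ runs from $f_{|A}$ to $g_{|A}$; this is evidently a typo in the paper, and your construction produces the homotopy in the consistent direction, from $f_{|U}$ to $g_{|U}$, restricting to $H$ on $A\times I$.
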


Let $C(Z,Y)$ be the function space with the compact-open topology. It is known that if $Z$ is a compact space, then the compact-open topology on $C(Z,Y)$ agrees with the topology induced by the metric:
\[
d(f,g)=sup\{d(f(z),g(z)):z\in Z, f,g\in C(Z,Y)\}.
\]

Consider the subspace $C_{\Bo}(Z,Y)$ of the space $C(Z,Y)$:
\[
C_{\Bo}(Z,Y) =\{f\in C(Z,Y):\pi_Y ~ f = \const\}.
\]

Let $\pi_{C_{\Bo}(Z,Y)}: C_{\Bo}(Z,Y)\to \Bo$ be a map given by $\pi_{C_{\Bo}(Z,Y)}(f)=\pi_Y(f(z)), z\in Z$. Consequently, the pair consisting of the space $C_{\Bo}(Z,Y)$ and the map $\pi_{C_{\Bo}(Z,Y)}$ is a space over $\Bo$.

\begin{proposition} \label{proposition 0.5}
\textit{Let $Y$ be an $\ANR_{\Bo}$-space and let $Z$ be a compact metric space. Then the space $C_{\Bo}(Z,Y)$ is an $\ANR_{\Bo}$-space}(\cite{12},\cite{16}).
\end{proposition}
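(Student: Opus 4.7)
The plan is to use Proposition \ref{proposition 0.1} and verify that $C_{\Bo}(Z,Y)$ is an $\ANE_{\Bo}$-space via the exponential correspondence. First I would check that $C_{\Bo}(Z,Y)$ belongs to $\M_{\Bo}$: since $Y \in \ANR_{\Bo}$ is metrizable and $Z$ is compact metric, the compact-open topology on $C(Z,Y)$ coincides with the sup-metric topology, so $C(Z,Y)$ is metrizable and $C_{\Bo}(Z,Y) \subseteq C(Z,Y)$ inherits a metric. The map $\pi_{C_{\Bo}(Z,Y)}$ is continuous, so $C_{\Bo}(Z,Y)$ is a metrizable space over $\Bo$.

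The heart of the argument is the extension property. Let $X \in \M_{\Bo}$, $A \subseteq X$ closed, and let $f : A \to C_{\Bo}(Z,Y)$ be a f.p.\ map. Equip $X \times Z$ with the projection $\pi_{X \times Z}(x,z) = \pi_X(x)$, making it a metrizable space over $\Bo$; then $A \times Z$ is closed in $X \times Z$. Define the transpose $\tilde{f} : A \times Z \to Y$ by $\tilde{f}(a,z) = f(a)(z)$. This is continuous by the exponential law (valid because $Z$ is compact Hausdorff), and it is f.p.\ over $\Bo$ since $\pi_Y(\tilde{f}(a,z)) = \pi_{C_{\Bo}(Z,Y)}(f(a)) = \pi_X(a) = \pi_{X \times Z}(a,z)$.

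Because $Y \in \ANR_{\Bo} = \ANE_{\Bo}$ by Proposition \ref{proposition 0.1}, there exists an open neighbourhood $V$ of $A \times Z$ in $X \times Z$ and a f.p.\ extension $\underline{\tilde{f}} : V \to Y$ of $\tilde{f}$. By compactness of $Z$ (the tube lemma), we may choose an open neighbourhood $U$ of $A$ in $X$ with $U \times Z \subseteq V$. Then define $\underline{f} : U \to C_{\Bo}(Z,Y)$ by $\underline{f}(u)(z) = \underline{\tilde{f}}(u,z)$. Continuity of $\underline{f}$ follows from the other direction of the exponential law. Moreover $\pi_Y(\underline{f}(u)(z)) = \pi_{X \times Z}(u,z) = \pi_X(u)$ is independent of $z$, so $\underline{f}(u) \in C_{\Bo}(Z,Y)$ and $\pi_{C_{\Bo}(Z,Y)} \circ \underline{f} = \pi_U$, confirming that $\underline{f}$ is a f.p.\ map. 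Clearly $\underline{f}|_A = f$. This verifies that $C_{\Bo}(Z,Y) \in \ANE_{\Bo}$, hence $C_{\Bo}(Z,Y) \in \ANR_{\Bo}$ by Proposition \ref{proposition 0.1}.

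I do not expect any serious obstacle; the only point requiring a little care is checking that the fibrewise conditions survive under transposition in both directions, and this is automatic because the base-point values $\pi_Y \circ \tilde{f}$ are controlled by $\pi_X$ rather than by $z$. The essential technical inputs are exactly the tube lemma and the exponential law, both of which are available because $Z$ is compact.
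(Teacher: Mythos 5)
Your argument is correct. Note that the paper itself gives no proof of this proposition --- it is quoted from the references [B$_4$], [B$_6$] --- so there is nothing internal to compare against; your exponential-law argument (transpose $f$ to $\tilde f:A\times Z\to Y$, extend using $Y\in\ANE_{\Bo}$ via Proposition \ref{proposition 0.1}, shrink the neighbourhood with the tube lemma, and transpose back, checking that the fibrewise conditions are preserved because $\pi_Y\circ\tilde f$ depends only on the $X$-coordinate) is the standard route and all the hypotheses it needs ($Z$ compact metric for metrizability of $C(Z,Y)$, local compactness for the exponential law, and the tube lemma) are available.
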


\section{Resolution and Strong Expansions of Spaces over $\bBo$}
An inverse system of the category $\bTop_{\bBo}$ is a collection  $\bX=(X_{\alpha},p_{\alpha\alpha^{'}},\ka)$ of space $ X_{\alpha}$ over $\Bo$ indexed by a directed set $\ka$ and f.p. maps $ p_{\alpha\alpha^{'}}: X_{\alpha^{'}}\to X_{\alpha}$, $\alpha \leq \alpha^{'}$,  such that $ p_{\alpha\alpha^{'}} ~ p_{\alpha^{'}\alpha^{''}}= p_{\alpha\alpha^{''}}$ and $p_{\alpha\alpha}=1_{X_{\alpha}}$, $\alpha \in \ka$.

A morphism $(f_{\beta},\varphi): \bX \to \bY=(Y_{\beta},q_{\beta\beta^{'}},\kb)$
of inverse systems of the category $\bTop_{\bBo}$ consists of a function $\varphi:\kb\to \ka$ and of f.p. maps $f_{\beta} :X_{\varphi(\beta)}\to Y_{\beta} $, $\beta\in \kb $, such that whenever $\beta \leq \beta^{'}$, then there is an index $\alpha \geq \varphi (\beta), \varphi (\beta^{'})$ for which $f_{\beta} ~ p_{\varphi(\beta)}= q_{\beta\beta^{'}}  ~ f_{\beta^{'}} ~ p_{\varphi(\beta^{'})\alpha}$.

Two morphisms $(f_{\beta},\varphi), (g_{\beta},\psi):\bX\to \bY$ are said to be equivalent, $f\mathop {\simeq}\limits_{\Bo}g$, provided for each $\beta\in \kb$ there is an $\alpha\in \ka$, $\alpha \geq \varphi(\beta), \psi(\beta)$, such that $f_{\beta} ~ p_{\varphi (\beta)\alpha}=g_{\beta} ~ p_{\psi(\beta)\alpha}$.

Let $\bpro-\bTop_{\bBo}$ be a category, whose objects are the inverse systems $\bX$ of the category $\bTop_{\bBo}$ and whose morphisms are the equivalence classes $\bff$ of morphisms $(f_{\beta,\varphi}):\bX\to \bY$ with respect to relation $\mathop {\simeq}\limits_{\Bo}$.

A morphism $\bp=(p_{\alpha}):X\to \bX=(X_{\alpha},p_{\alpha\alpha^{'}},\ka)$ from a rudimentary system $(X)$ to an inverse system $\bX$ consists of the f.p. maps $p_{\alpha}:X\to X_{\alpha}$,$\alpha\in \ka$, such that $p_{\alpha}=p_{\alpha\alpha^{'}} ~ p_{\alpha^{'}}$, $\alpha \leq \alpha^{'}$.

\begin{Definition}\label{Definition1.1}
\textit{Let $X$ be a space over $\Bo$ and let $\bX=(X_{\alpha},p_{\alpha\alpha^{'}},\ka)$ be an inverse system of the category $\bTop_{\bBo}$. We say that $\bp:X\to \bX$ is a resolution over $\Bo$ or fiber resolution of the space $X$ over $\Bo$ provided it satisfies the following two conditions:}

\item[R$_{\Bo}1).$]  \textit{Let $P\in \ANR_{\Bo}$, let $\mathscr{U}$ be an open covering of $P$ and let $h:X\to P$ be a f.p. map. Then there exist an index $\alpha\in \ka$  and a f.p. map $f:X_{\alpha}\to P$  such that $h$ and $f ~ p_{\alpha}$ are $\mathscr{U}$-near.}

\item[R$_{\Bo}2).$] \textit{Let $P\in \ANR_{\Bo}$ and let  $\mathscr{U}$ be an open covering of $P$. Then there is an open cover $\mathscr{U}^{'}$ of $P$ with the following property: if $\alpha\in \ka$ and $f,f^{'}:X\to P$ are f.p. maps such that the f.p. maps $f ~ p_{\alpha}$ and $f^{'} ~ p_{\alpha }$ are $\mathscr{U}^{'}$-near, then there is an index $\alpha^{'} \geq \alpha$ such that the f.p. maps $f ~ p_{\alpha \alpha^{'}}$ and $f^{'} ~ p_{\alpha \alpha^{'}}$ are $\mathscr{U}$-near.}

\item \textit{If in a fiber resolution $\bp:X\to \bX=(X_{\alpha},p_{\alpha\alpha^{'}},\ka)$ of the space $X$ over $\Bo$ each $X_{\alpha}$ is an $\ANR_{\Bo}$, then we say that $\bp$ is a fiber $\ANR_{\Bo}$-resolution.
}\end{Definition}
The next theorem is essential in the construction of the fiber shape category for arbitrary spaces over $\Bo$.

\begin{theorem}\label{theorem 1.2}
Every space $X$ over a metrizable space $\Bo$ admits an $\ANR_{\Bo}$-resolution over $\Bo$.
\end{theorem}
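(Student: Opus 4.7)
The approach is to adapt the Marde\v{s}i\'{c}--Lisica construction of $\ANR$-resolutions (\cite{80}) to the fiber setting, with $\ANR_{\Bo}$-spaces in the role of ANRs, and with Propositions \ref{proposition 0.1}--\ref{proposition 0.5} supplying the necessary fiber-preserving extension and approximation properties. The indexing directed set is built from finite families of f.p.\ maps of $X$ into bounded-weight $\ANR_{\Bo}$-spaces, together with open covers on those targets; a combined-map construction into fiber products produces the required inverse system.

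Concretely, set $\tau=\max\{w(X),w(\Bo),\aleph_0\}$ and let $A$ be the set of finite collections $\alpha=\{(P_i^\alpha,h_i^\alpha,\mathscr{U}_i^\alpha)\}_{i=1}^{n_\alpha}$ with each $P_i^\alpha\in\ANR_{\Bo}$ of weight $\le\tau$, each $h_i^\alpha\colon X\to P_i^\alpha$ a f.p.\ map, and each $\mathscr{U}_i^\alpha$ a finite open cover of $P_i^\alpha$, ordered by inclusion of data combined with refinement of the covers. For each $\alpha$ I would form the combined f.p.\ map $\hat h_\alpha\colon X\to P_1^\alpha\times_{\Bo}\cdots\times_{\Bo}P_{n_\alpha}^\alpha$, take $X_\alpha$ to be an $\ANR_{\Bo}$-neighbourhood of the closure of its image in a suitable $\ANR_{\Bo}$-ambient (existence and weight bound via Propositions \ref{proposition 0.1} and \ref{proposition 0.2}), set $p_\alpha=\hat h_\alpha$ corestricted, and let $p_{\alpha\alpha'}$ be the restriction of the canonical projection. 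This yields an inverse system $\bX=(X_\alpha,p_{\alpha\alpha'},A)$ of $\ANR_{\Bo}$-spaces and a morphism $\bp=(p_\alpha)\colon X\to\bX$.

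To verify R$_{\Bo}$1, given a f.p.\ map $h\colon X\to P$ with $P\in\ANR_{\Bo}$ and an open cover $\mathscr{U}$ of $P$, I would first replace $P$ by an $\ANR_{\Bo}$-subspace $P'$ of weight $\le\tau$ containing $h(X)$ using Proposition \ref{proposition 0.2}, then include $(P',h,\mathscr{U}|_{P'})$ in some $\alpha\in A$; Proposition \ref{proposition 0.1} supplies a f.p.\ extension $f\colon X_\alpha\to P$ of the projection, so that $f\circ p_\alpha$ and $h$ are $\mathscr{U}$-near. To verify R$_{\Bo}$2, given $P$ and $\mathscr{U}$, I would use Proposition \ref{proposition 0.3} to pick a refinement $\mathscr{V}$ having the $\mathscr{U}$-homotopy property and set $\mathscr{U}'=\mathscr{V}$; whenever $f\circ p_\alpha$ and $f'\circ p_\alpha$ are $\mathscr{V}$-near, enlarge $\alpha$ to $\alpha'$ by appending the data $(P,f,\mathscr{U})$ and $(P,f',\mathscr{U})$ together with a common refinement, then propagate the connecting f.p.\ $\mathscr{U}$-homotopy through the inverse system using Proposition \ref{proposition 0.4} to obtain the required $\mathscr{U}$-nearness of $f\circ p_{\alpha\alpha'}$ and $f'\circ p_{\alpha\alpha'}$.

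The main obstacle is R$_{\Bo}$2, since the enlargement step has to be performed while simultaneously preserving the f.p.\ character of every intermediate homotopy and extension. The classical Marde\v{s}i\'{c} argument carries over, but only because the $\ANE_{\Bo}$-characterisation of Proposition \ref{proposition 0.1} and the f.p.\ homotopy-extension property of Proposition \ref{proposition 0.4} upgrade the standard non-fiber extension and near-implies-homotopic lemmas to their fiber-preserving counterparts; absent these, the combined-map target $X_\alpha$ would not have enough structure to absorb the homotopies required for the refinement to $\alpha'$.
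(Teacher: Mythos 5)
Your overall skeleton --- bounded\-/weight $\ANR_{\Bo}$-targets, fiber products over $\Bo$, a directed set of finite collections of f.p.\ maps into them, and a factorization argument for R$_{\Bo}1)$ (your use of Proposition \ref{proposition 0.2} is essentially the paper's Lemma \ref{lemma 1.3}) --- matches the paper's construction. The genuine gap is in R$_{\Bo}2)$. The paper performs a second enlargement of the index set that your construction omits: after building the system $\bY=(Y_\beta,q_{\beta\beta'},\kb)$ of fiber products, the final index set $\ka$ consists of pairs $\alpha=(\beta,U)$ with $U$ an open neighbourhood of $q_\beta(X)$ in $Y_\beta$, ordered by $(\beta,U)\le(\beta',U')$ iff $\beta\le\beta'$ and $q_{\beta\beta'}(U')\subseteq U$, with $X_\alpha=U$, $p_\alpha=q_\beta$ corestricted and $p_{\alpha\alpha'}=q_{\beta\beta'}|_{U'}$. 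This neighbourhood parameter is precisely what makes R$_{\Bo}2)$ hold (even with $\mathscr{U}'=\mathscr{U}$): if $f p_\alpha$ and $f' p_\alpha$ are $\mathscr{U}$-near, then $W=\bigcup_{U\in\mathscr{U}}\bigl(f^{-1}(U)\cap f'^{-1}(U)\bigr)$ is open in $Y_\beta$ and contains $q_\beta(X)$, so $\alpha'=(\beta,W)\ge\alpha$ and $fp_{\alpha\alpha'}=f|_W$, $f'p_{\alpha\alpha'}=f'|_W$ are $\mathscr{U}$-near by the very definition of $W$. The verification is purely set-theoretic; no homotopy enters.

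Your proposed verification of R$_{\Bo}2)$ does not reach this conclusion. First, $f$ and $f'$ are defined on $X_\alpha$, not on $X$, so the triples $(P,f,\mathscr{U})$ and $(P,f',\mathscr{U})$ are not admissible index data in your directed set; you would have to append $(P,fp_\alpha,\mathscr{U})$ and $(P,f'p_\alpha,\mathscr{U})$. But then the new coordinate projections of $X_{\alpha'}$ agree with $fp_{\alpha\alpha'}$ and $f'p_{\alpha\alpha'}$ only on the image of $X$, and nothing controls $f$ and $f'$ on the rest of $X_{\alpha'}$: the conclusion of R$_{\Bo}2)$ is a nearness statement on all of $X_{\alpha'}$, and it is obtained by shrinking the domain, not by enlarging the target. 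Second, Propositions \ref{proposition 0.3} and \ref{proposition 0.4} produce f.p.\ homotopies, whereas R$_{\Bo}2)$ demands $\mathscr{U}$-nearness of maps; those homotopy tools are what the paper uses later for the strong-expansion conditions in Lemma \ref{lemma 1.9} and Theorem \ref{theorem 1.5}, not for the resolution condition. Finally, with $X_\alpha$ chosen as ``an'' $\ANR_{\Bo}$-neighbourhood independently for each $\alpha$, the canonical projection need not carry $X_{\alpha'}$ into $X_\alpha$, so your bonding maps are not even well defined; the paper's ordering condition $q_{\beta\beta'}(U')\subseteq U$ exists to guarantee this. The repair is to adopt the paper's neighbourhood-indexed enlargement.
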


In the proof of Theorem \ref{theorem 1.2} we shall need the following lemma. 
\begin{lemma}\label{lemma 1.3}
Let $f:X\to Y$ be a f.p. map from the topological space $X$ over $\Bo$ to an $\ANR_{\Bo}$-space $Y$. Then there exists an $\ANR_{\Bo}$-space $Z$ of weight
$w(Z)\leq max\{w(X),w(\Bo),\aleph_{0})\}$
and f.p. maps $g:X\to Z$ and $h:Z\to Y$ such that $f ~ h=g$.
\end{lemma}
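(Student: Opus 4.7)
The plan is to embed $Y$ into an ambient convex setup, isolate a small-weight convex slice containing $f(X)$, and intersect with the neighborhood on which the fibrewise retraction is defined.

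Following the setup used in the proof of Proposition \ref{proposition 0.3}, realize $Y$ as a closed fibrewise subspace of $\Bo\times K$, where $K$ is a convex set in a normed vector space $L$, and fix a fibrewise retraction $r\colon G\to Y$ from an open neighborhood $G$ of $Y$ in $\Bo\times K$. Write $f(x)=(\pi_X(x),f'(x))$ with $f'\colon X\to K$ continuous. Set $\tau=\max\{w(X),w(\Bo),\aleph_0\}$ and let $K_0\subseteq K$ be the closure of the convex hull of $f'(X)$. Since $K$ is metrizable, weight equals density on its subspaces, and a continuous image has density at most that of its domain; combined with the standard fact that rational convex combinations of a countable dense set are dense in the closed convex hull, this gives $w(K_0)\le\tau$.

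Now put $Z=(\Bo\times K_0)\cap G$, viewed as a space over $\Bo$ via the first projection; then $w(Z)\le\tau$. Define $g\colon X\to Z$ by $g(x)=(\pi_X(x),f'(x))$, which lands in $Z$ because $f(x)\in Y\subseteq G$ and $f'(x)\in K_0$, and set $h=r|_Z\colon Z\to Y$. Both $g$ and $h$ are f.p., and $h\circ g=r\circ f=f$, giving the required factorization.

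The main thing to verify (and the main obstacle) is that $Z$ is an $\ANR_{\Bo}$. By Proposition \ref{proposition 0.1} it suffices to show $Z\in\ANE_{\Bo}$. First, $\Bo\times K_0$ is an $\Ae_{\Bo}$: given $\varphi\colon A\to\Bo\times K_0$ with $A$ closed in some $W\in\M_{\Bo}$, keep the first coordinate equal to $\pi_W$ and extend the second coordinate $A\to K_0$ to a continuous map $W\to K_0$, using that the convex set $K_0$ is an absolute extensor and $W$ is metrizable. The open subset $Z\subseteq\Bo\times K_0$ then inherits $\ANE_{\Bo}$ by the standard neighborhood argument: extend first to $W\to\Bo\times K_0$ and then restrict to the preimage of the open set $G$, which is an open neighborhood of $A$. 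This $\ANE_{\Bo}$ check together with the convex-hull weight bookkeeping is the real content of the proof; the rest is substitution.
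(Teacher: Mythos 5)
Your proof is correct, but it takes a genuinely different route from the paper's. The paper factors through the image: it applies Proposition \ref{proposition 0.2} to the metrizable space $f(X)\subseteq Y$ to realize $f(X)$ as a closed subset of an $\ANR_{\Bo}$-space $M$ of weight at most $\max\{w(X),w(\Bo),\aleph_0\}$, then uses $Y\in\ANE_{\Bo}$ (Proposition \ref{proposition 0.1}) to extend the inclusion $f(X)\hookrightarrow Y$ to a f.p.\ map $h$ on an open neighbourhood $Z$ of $f(X)$ in $M$, and takes $g$ to be the corestriction of $f$; the $\ANR_{\Bo}$ property and the weight bound for $Z$ are then inherited from $M$ with no further work. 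You instead keep the ambient convex embedding $Y\subseteq\Bo\times K$ with its retraction $r\colon G\to Y$, cut down to the closed convex hull $K_0$ of the second coordinate of $f(X)$, and set $Z=(\Bo\times K_0)\cap G$ with $h=r|_Z$; the factorization $h\circ g=f$ then comes from $r$ being a retraction rather than from an extension of the inclusion. The price you pay is having to verify $Z\in\ANE_{\Bo}$ by hand (Dugundji extension into the convex set $K_0$ in the second coordinate, then restriction to the preimage of the open set $G$), which is essentially a re-proof of the special case of Proposition \ref{proposition 0.2} that you need; the benefit is that your argument is self-contained and makes the weight control completely explicit via the rational-convex-combination count, whereas the paper's version leans on the black box of Proposition \ref{proposition 0.2} and on the (unstated but standard) fact that open subspaces of $\ANR_{\Bo}$-spaces are $\ANR_{\Bo}$. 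Both arguments establish the intended conclusion $h\circ g=f$ (the equation ``$f\,h=g$'' in the statement is a typo for this).
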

\begin{proof}
By Proposition \ref{proposition 0.2} we can assume that $f(X)$ is a closed subset of an $\ANR_{\Bo}$-space $M$, which satisfies the condition $w(M)\leq max(w(f(X)), w(\Bo), \aleph_{0})$.

Since for metric spaces the weight coincides with the density we conclude that 
\[
w(f(X)) = s(f(X))\leq s(X)\leq w(X).
\]

Since $Y$ is an $\ANR_{\Bo}$-space there exist an open neighbourhood $Z$ of $f(X$) in $M$ and a f.p. map $h:Z\to Y$, which extends the f.p. inclusion $i:f(X)\to Y$, i.e. $h~j=i$, where $j:f(X)\hookrightarrow Z$ is the fiber inclusion too. Let $g=j~f^{|X}$. Here $f^{|X}$ is the restriction $f:X\to f(X)$ of map $f$. Note that 
\[h~g=h~j~f^{|X}=i~f^{|X}=f.\]

It is readily checked that $w(Z)\leq max\{w(X),w(\Bo),\aleph_0 \}$ and $Z$ is an $\ANR_{\Bo}$-space.
\end{proof}

\begin{proof}[\textbf{Proof of Theorem \ref{theorem 1.2}}]
We say that two f.p. maps $p:X\to P$, $p^{'}:X\to P^{'}$ are equivalent if there is a f.p. homeomorphism $h:P\to P^{'}$ such that $h ~ p=p^{'}$. Let $\kc$ consist of all equivalence classes of f.p. maps $p:X\to P$, where $P$ is an $\ANR_{\Bo}$-space of weight $w(P)\leq max\{w(X),w(\Bo),\aleph_{0}\}$.

Let $\gamma\in \kc$ and let $p_{\gamma}:X\to Y_{\gamma}$ be a f.p. map from the class $\gamma$. We order the set $\kb$ of all finite subsets $\beta=\{\gamma_1, \gamma_2,\cdots,\gamma_n\}$ of the set $\kc$ by inclusion. The set $\kb$ is a directed set. Let $Y_\beta = Y_{\gamma_1}\times  Y_{\gamma_2}\times  \cdots  \times Y_{\gamma_n}$ be the product of the space $Y_{\gamma_i}$ over $\Bo$, $\gamma_i\in \beta=\{\gamma_1, \gamma_2, \cdots,\gamma_n\}$, $i=1,2, \cdots ,n$ in the category $\bTop_{\bBo}$, i.e. the pull-back of the maps $Y_{\gamma_{i}}\to \Bo$,$i=1,2,\cdots,n$. 

For every $\beta\leq \beta^{'}=\{\gamma_1, \gamma_2,\cdots,\gamma_n, \gamma_{n+1}, \cdots, \gamma_m\}$ we define the f.p. map $q_{\beta \beta^{'}}:Y_{\beta^{'}}\to Y_{\beta}$ as the projection
\[
q_{\beta \beta^{'}}(y_{\gamma_1},  y_{\gamma_2},  \cdots,y_{\gamma_n}, y_{\gamma_{n+1}}, \cdots, y_{\gamma_m})= (y_{\gamma_1},  y_{\gamma_2},  \cdots,y_{\gamma_n}).
\]

Next we define the maps $q_{\beta}:X\to Y_{\beta}$, $\beta \in \kb$ as the maps given by the formula
\[q_{\beta} (x)=(q_{\gamma_1}(x), q_{\gamma_2}(x), \cdots,q_{\gamma_n}(x)),~~~~~x\in X.\]

It is readily seen that $Y_{\beta}$ is $\ANR_{\Bo}$-space and $q_{\beta \beta^{'}} ~ q_{\beta^{'} \beta^{''}}=q_{\beta \beta^{''}}$ for every $\beta \leq \beta^{'}\leq \beta^{''}$.

Note that  $\bY=(Y_{\beta}, q_{\beta\beta^{'}}, \kb)$ is an $\ANR_{\Bo}$-system and $\bq=(q_{\beta}):X\to \bY$ is a morphism of $\bpro-{\bTop_{\bBo}}$.

Condition R$_{\Bo}1)$ is an immediate consequence of Lemma \ref{lemma 1.3}.

We replace the inverse system $\bY$ by a larger inverse system. All pairs $\alpha=(\beta,U)$, where $\beta\in \kb$ and $U$ is an open neighbourhood of $q_{\beta}(X)$ in $Y_{\beta}$, form a directed set $\ka$ provided $\alpha=(\beta,U)\leq (\beta^{'},U^{'})=\alpha^{'}$ means that $\beta \leq \beta^{'}$ and $q_{\beta\beta^\prime}(U^\prime )\subseteq U$.

For every $\alpha=(\beta,U)\in \ka$ and $\alpha\leq \alpha^\prime$ we put $X_{\alpha}=U$, $p_{\alpha}=q_{\beta}:X\to U$ and $p_{\alpha\alpha^{'}}=q_{\beta\beta^{'}|U^{'}}:U^{'}\to U$. We obtain an $\ANR_{\Bo}$ inverse system $\bX=(X_{\alpha}, p_{\alpha\alpha^{'}},\ka)$. It is readily checked that the morphism $\bp=(p_{\alpha}):X\to \bX$ of category $\bpro-\bTop_{\bBo}$ satisfies conditions R$_{\Bo}1)$ and R$_{\Bo}2)$.
\end{proof}

\begin{Definition}\label{Definition 1.4}
\textit{Let $X$ be a topological space over $\Bo$, $\bX=(X_{\alpha},p_{\alpha\alpha^{'}},\ka)$ an inverse system in $\bTop_{\bBo}$ and $\bp=(p_{\alpha}):X\to \bX$ a morphism of $\bpro-\bTop_{\bBo}$. We call $\bp$ an expansion over $\Bo$ of the space $X$ over $\Bo$ provided it has the following properties:}

\item[E$_{\Bo}1).$] \textit{For every $\ANR_{\Bo}$-space $P$ over $\Bo$ and f.p. map $f:X\to P$ there is an index $\alpha\in \ka$ and a f. p. map $h:X_{\alpha}\to P$ such that $h ~ p_{\alpha}\mathop {\simeq}\limits_{\Bo}f$.}

\item[E$_{\Bo}2).$] \textit{If $f, f^{'}:X_{\alpha}\to P$ are f. p. maps, $P\in \ANR_{\Bo}$ and $f ~ p_{\alpha}\mathop {\simeq}\limits_{\Bo}f^{'} ~ p_{\alpha}$, then there is an index $\alpha^{'}\geq \alpha$ such that $f ~ p_{\alpha\alpha^{'}} \mathop {\simeq}\limits_{\Bo} f^{'} ~ p_{\alpha\alpha^{'}}$.}
\end{Definition}

\begin{Definition}\label{Definition 1.5a}
\textit{A morphism $\bp:X\to (X_{\alpha},p_{\alpha\alpha^{'}},\ka)$ is called a strong expansion over $\Bo$ provided it satisfies condition E$_{\Bo}1)$ and the following condition:}

SE$_{\Bo}2).$ \textit{Let $P$ be an $\ANR_{\Bo}$-space, let $f_0,f_1:X_{\alpha}\to P$, $\alpha\in \ka$ be f.p. maps and let $F:X\times I\to P$ be a f.p. homotopy such that}
\[S(x,0)=f_0 p_\alpha (x),~~~~~x\in X,\]
\[S(x,1)=f_1 p_\alpha (x),~~~~~x\in X.\]

\textit{Then there exists a $\alpha^{'}\geq \alpha$ and a f.p. homotopy $H:X_{\alpha^{'}}\times I\to P$, such that 
}\[H(x,0)=f_0p_{\alpha\alpha^{'}}(z),~~~~~z\in X_{\alpha^{'}},\]
\[H(x,1)=f_1p_{\alpha\alpha^{'}}(z),~~~~~z\in X_{\alpha^{'}},\]
\[H(p_{\alpha^{'}}\times 1_{I})\mathop {\simeq}\limits_{\Bo} S(\rel (X\times \partial I)).\]
\end{Definition}

It is clear that, every strong expansion over $\Bo$ is an expansion over $\Bo$.

If all $X_\alpha \in \ANR_{\Bo}$, then $\bp$ is called an $\ANR_{\Bo}$-expansion and strong $\ANR_{\Bo}$-expansion, respectively.

The main result of section 1 is the following theorem.

\begin{theorem}\label{theorem 1.5}
Let $X$ be a topological space over $\Bo$. Then every resolution  $\bp:X \to \bX$ over $\Bo$ induces a strong $\ANR_{\Bo}$-expansion.
\end{theorem}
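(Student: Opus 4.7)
The plan is to verify conditions E$_{\Bo}1)$ and SE$_{\Bo}2)$ for the resolution $\bp\colon X\to\bX$; all $X_\alpha\in\ANR_{\Bo}$ holds by hypothesis. Condition E$_{\Bo}1)$ is a routine upgrade from closeness to homotopy: given a f.p.\ map $f\colon X\to P$ with $P\in\ANR_{\Bo}$ and an open cover $\mathscr{U}$ of $P$, Proposition \ref{proposition 0.3} supplies a refinement $\mathscr{V}$, condition R$_{\Bo}1)$ applied to $(f,\mathscr{V})$ produces an index $\alpha$ and a f.p.\ map $h\colon X_\alpha\to P$ with $hp_\alpha$ being $\mathscr{V}$-near $f$, and another application of Proposition \ref{proposition 0.3} promotes this to $hp_\alpha\mathop{\simeq}\limits_{\Bo}f$.

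The serious work lies in SE$_{\Bo}2)$. My approach is to ``curry'' the given f.p.\ homotopy $S\colon X\times I\to P$ into an f.p.\ map $\hat S\colon X\to C_{\Bo}(I,P)$ defined by $\hat S(x)(t)=S(x,t)$; the constant-$\pi_P$ condition on each path $\hat S(x)$ is precisely the hypothesis that $S$ is fiber preserving. By Proposition \ref{proposition 0.5}, $Q:=C_{\Bo}(I,P)$ is itself an $\ANR_{\Bo}$-space, and the endpoint evaluations $e_0,e_1\colon Q\to P$ satisfy $e_i\hat S=f_ip_\alpha$. I would then choose a sufficiently fine open cover $\mathscr{W}$ of $Q$ (fine enough that $\mathscr{W}$-nearness is carried by each $e_i$ into a preassigned fine cover of $P$), apply R$_{\Bo}1)$ to get $\alpha_1\geq\alpha$ and $h\colon X_{\alpha_1}\to Q$ with $hp_{\alpha_1}$ being $\mathscr{W}$-near $\hat S$, set $g_i:=e_ih$, and combine R$_{\Bo}2)$ with Proposition \ref{proposition 0.3} to produce a common index $\alpha'\geq\alpha_1$ together with f.p.\ homotopies $G_i\colon X_{\alpha'}\times I\to P$ from $g_ip_{\alpha_1\alpha'}$ to $f_ip_{\alpha\alpha'}$. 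The candidate $H\colon X_{\alpha'}\times I\to P$ is then built by a standard three-piece reparametrization: $G_0$ run backward on $[0,1/3]$, the uncurried $hp_{\alpha_1\alpha'}$ on $[1/3,2/3]$, and $G_1$ on $[2/3,1]$. By construction $H(z,i)=f_ip_{\alpha\alpha'}(z)$ for $i=0,1$.

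The main obstacle is the final, second-order requirement $H\circ(p_{\alpha'}\times 1_I)\mathop{\simeq}\limits_{\Bo}S$ rel $X\times\partial I$. My strategy is to verify it in the curried form inside $Q$: the adjoint $\hat H\circ p_{\alpha'}$ is fiber homotopic to $hp_{\alpha_1}$ via the path-shrinking homotopy that collapses the $G_i$-segments back to constant endpoint paths, while $hp_{\alpha_1}$ is fiber homotopic to $\hat S$ by Proposition \ref{proposition 0.3}. Concatenating produces a homotopy in $Q$ from $\hat H\circ p_{\alpha'}$ to $\hat S$; to arrange that its $e_0$- and $e_1$-images are constant (which upon un-currying is exactly rel $X\times\partial I$), I would invoke the rel-$A$ refinement at the end of Proposition \ref{proposition 0.3} with $A=X\times\partial I$, or alternatively rectify via Proposition \ref{proposition 0.4} so that the homotopy becomes stationary on the endpoints. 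Fiber-preservation is automatic throughout because every point of $Q$ is a constant-$\pi$ path and all manipulations take place inside $Q$; verifying this rel-endpoints coherence cleanly is the delicate step where one must be careful not to lose control of the f.p.\ condition under the concatenations and reparametrizations.
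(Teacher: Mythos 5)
Your treatment of E$_{\Bo}1)$ and the first half of SE$_{\Bo}2)$ is essentially the paper's: the paper also passes to the fiberwise path space (its Lemma \ref{lemma 1.8} builds a space $P''$ of pairs consisting of endpoint data and a path which, in the application with $P'=P\times_{\Bo}P$, is just $C_{\Bo}(I,P)$ in disguise), approximates the adjoint $\hat S$ by a map $h$ defined on some $X_{\alpha''}$ via R$_{\Bo}1)$, and corrects the endpoints with small connecting homotopies obtained from R$_{\Bo}2)$ and Proposition \ref{proposition 0.3}. Citing Proposition \ref{proposition 0.5} for $C_{\Bo}(I,P)\in\ANR_{\Bo}$ is a legitimate shortcut past the paper's hands-on verification that $P''\in\ANR_{\Bo}$.

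The gap is in the last step, and you have correctly located it but not closed it. With the fixed $[0,1/3]$, $[1/3,2/3]$, $[2/3,1]$ reparametrization, $H(p_{\alpha'}(x),t)$ is close to $S(x,\lambda(t))$ for a reparametrization $\lambda$ far from the identity, so $H(p_{\alpha'}\times 1_I)$ and $S$ are \emph{not} $\mathscr{V}$-near as maps $X\times I\to P$; hence the ``rel $A$'' clause of Proposition \ref{proposition 0.3} with $A=X\times\partial I$ cannot be applied to this pair, and Proposition \ref{proposition 0.4} (extension of a homotopy to a neighbourhood) does not convert a free homotopy into one rel $X\times\partial I$ either. Your curried homotopy in $Q$ from $\hat H p_{\alpha'}$ to $\hat S$ moves the endpoint evaluations along loops (down the $G_i$ and back along the $e_i$-image of the Proposition \ref{proposition 0.3} homotopy), so uncurrying it is not rel $X\times\partial I$; repairing this needs a genuine rectification argument for the endpoint-evaluation fibration $Q\to P\times_{\Bo}P$, which you do not supply. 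The paper avoids the problem by first proving the stronger Lemma \ref{lemma 1.9}: using a string of star-refinements, $\mathscr{U}'$-small connecting homotopies $K,L$, and a \emph{variable} gluing parameter $\varphi(z)$ taken from [M$_1$] in place of your fixed $1/3$, it produces $H$ with the correct endpoints and with $(S,H(1\times p_{\alpha'}))\leq\mathscr{V}$ for any prescribed $\mathscr{V}$; only then does the ``rel $A$'' clause of Proposition \ref{proposition 0.3} apply to the pair $S$, $H(p_{\alpha'}\times 1_I)$ on $X\times I$ and yield $H(p_{\alpha'}\times 1_I)\mathop{\simeq}\limits_{\Bo}S$ rel $X\times\partial I$. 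You need either this uniform-closeness lemma or an explicit endpoint-rectification argument; as written, the proof of SE$_{\Bo}2)$ is incomplete.
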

\begin{corollary}\label{corollary 1.6}
Every $\ANR_{\Bo}$-resolution over $\Bo$ induces $\ANR_{\Bo}$-expansion.
\end{corollary}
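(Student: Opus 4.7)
The condition $\text{E}_{\Bo}1)$ of strong expansion follows immediately from $\text{R}_{\Bo}1)$ together with Proposition \ref{proposition 0.3}: for $h:X\to P$ with $P\in\ANR_{\Bo}$ and open cover $\mathscr{U}$ of $P$, take the refining cover $\mathscr{V}$ from Proposition \ref{proposition 0.3}, apply $\text{R}_{\Bo}1)$ with $\mathscr{V}$ to obtain $f:X_{\alpha}\to P$ with $fp_{\alpha}$ and $h$ being $\mathscr{V}$-near, and conclude $h\mathop{\simeq}\limits_{\Bo}fp_{\alpha}$. The substance of the theorem is therefore the verification of $\text{SE}_{\Bo}2)$.

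To prove $\text{SE}_{\Bo}2)$, I plan to pass to the adjoint formulation via the fiber path space $C_{\Bo}(I,P)$, which is an $\ANR_{\Bo}$ by Proposition \ref{proposition 0.5}. A f.p.\ homotopy $S:X\times I\to P$ corresponds bijectively to the f.p.\ map $\tilde S:X\to C_{\Bo}(I,P)$ defined by $\tilde S(x)(t)=S(x,t)$, and the endpoint evaluations $e_0,e_1:C_{\Bo}(I,P)\to P$ satisfy $e_i\tilde S=f_ip_{\alpha}$. In this language, $\text{SE}_{\Bo}2)$ reduces to producing an index $\alpha'\geq\alpha$ and a f.p.\ map $\tilde H:X_{\alpha'}\to C_{\Bo}(I,P)$ with $e_i\tilde H=f_ip_{\alpha\alpha'}$ such that $\tilde Hp_{\alpha'}$ is f.p.\ homotopic to $\tilde S$ by a homotopy fixing both endpoints; this encodes the ``rel $X\times\partial I$'' clause.

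The construction proceeds in two stages. First, apply $\text{R}_{\Bo}1)$ to $\tilde S$ with a carefully chosen open cover of $C_{\Bo}(I,P)$ to obtain $\tilde H_1:X_{\alpha_1}\to C_{\Bo}(I,P)$ whose composite with $p_{\alpha_1}$ is close to $\tilde S$; evaluating, the endpoint maps $e_i\tilde H_1 p_{\alpha_1}$ are then close to $f_ip_{\alpha}$ in $P$. Second, $\text{R}_{\Bo}2)$ combined with Proposition \ref{proposition 0.3} applied to $P$ yields $\alpha_2\geq\alpha_1$ such that $e_i\tilde H_1 p_{\alpha_1\alpha_2}$ and $f_ip_{\alpha\alpha_2}$ are joined by short f.p.\ homotopies $\varphi_0,\varphi_1:X_{\alpha_2}\times I\to P$. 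Concatenating $\varphi_0$, $\tilde H_1 p_{\alpha_1\alpha_2}$, and the reverse of $\varphi_1$, then reparametrizing $I$, yields the desired $\tilde H:X_{\alpha_2}\to C_{\Bo}(I,P)$ with exact endpoints $f_ip_{\alpha\alpha_2}$. The required rel-$\partial I$ homotopy between $\tilde Hp_{\alpha_2}$ and $\tilde S$ then follows by a further application of Proposition \ref{proposition 0.3}, now inside $C_{\Bo}(I,P)$, combined with Proposition \ref{proposition 0.4} to extend the homotopy given on the closed subset $X\times\partial I\subset X\times I$.

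The main obstacle I anticipate is the careful bookkeeping of coverings. One must select the initial cover of $C_{\Bo}(I,P)$ in the first application of $\text{R}_{\Bo}1)$ finely enough that all subsequent refinements, prescribed by Proposition \ref{proposition 0.3} applied in both $P$ and $C_{\Bo}(I,P)$ and by $\text{R}_{\Bo}2)$, still produce a homotopy that is genuinely constant on $X\times\partial I$ rather than merely close to it. The relative clause of Proposition \ref{proposition 0.3} (the homotopy rel $A$) and the neighborhood extension property of Proposition \ref{proposition 0.4} are the essential ingredients that convert the obtained closeness into the required rel-$\partial I$ rigidity.
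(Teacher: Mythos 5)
Your proposal proves far more than the statement asks, and in doing so it drifts into the proof of a different result. Corollary \ref{corollary 1.6} only requires the expansion conditions E$_{\Bo}1)$ and E$_{\Bo}2)$ of Definition \ref{Definition 1.4}, and the paper obtains it in one line: by Theorem \ref{theorem 1.5} the given resolution induces a \emph{strong} $\ANR_{\Bo}$-expansion, every strong expansion over $\Bo$ is an expansion over $\Bo$ (as remarked just before Theorem \ref{theorem 1.5}), and the terms are $\ANR_{\Bo}$-spaces by hypothesis. What you have written is instead an outline of a proof of Theorem \ref{theorem 1.5} itself (E$_{\Bo}1)$ plus SE$_{\Bo}2)$), recast in the adjoint language of the fiber path space $C_{\Bo}(I,P)$. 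That is not wrong as a strategy --- SE$_{\Bo}2)$ implies E$_{\Bo}2)$ --- and your E$_{\Bo}1)$ argument via R$_{\Bo}1)$ and Proposition \ref{proposition 0.3} is exactly the paper's. Your SE$_{\Bo}2)$ outline parallels the paper's Lemmas \ref{lemma 1.8} and \ref{lemma 1.9}, where the path space appears inside the auxiliary space $P''\subseteq P'\times_{\Bo}C_{\Bo}(I,P)$ and the covering bookkeeping is done with iterated star-refinements.

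Within that larger argument, the final step does not work as you describe it. Applying Proposition \ref{proposition 0.3} ``inside $C_{\Bo}(I,P)$'' to the near maps $\tilde{H}p_{\alpha'}$ and $\tilde{S}$ produces a homotopy through maps into $C_{\Bo}(I,P)$ whose endpoint evaluations are uncontrolled: the rel-$A$ clause of Proposition \ref{proposition 0.3} applies only where the two maps \emph{agree}, and $\tilde{H}p_{\alpha'}$ and $\tilde{S}$ need not agree anywhere --- only their evaluations at $0$ and $1$ do. Proposition \ref{proposition 0.4} then only extends a homotopy over a \emph{neighbourhood} of $X\times\partial I$ in $X\times I$, which does not by itself yield a homotopy rel $X\times\partial I$ on all of $X\times I$. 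The repair is either the paper's: arrange (by your concatenation step) that $H$ has exact endpoints, observe that $F$ and $H(p_{\alpha'}\times 1_I)$ are then $\mathscr{V}$-near maps $X\times I\to P$ agreeing on the closed set $A=X\times\partial I$, and invoke the rel-$A$ clause of Proposition \ref{proposition 0.3} directly in $P$; or else work throughout in the endpoint-constrained subspace of the path space and prove that it is an $\ANR_{\Bo}$-space, which is precisely the content of Lemma \ref{lemma 1.8} and is not free. For the corollary as stated, none of this machinery is needed.
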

\begin{corollary}\label{corollary 1.7}
Every space $X$ over $\Bo$ admits a cofinite strong $\ANR_{\Bo}$-expansion.
\end{corollary}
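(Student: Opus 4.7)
The plan is to combine Theorems \ref{theorem 1.2} and \ref{theorem 1.5} with a standard reindexing argument that replaces an arbitrary directed index set by a cofinite one. By Theorem \ref{theorem 1.2} every space $X$ over $\Bo$ admits a fiber $\ANR_{\Bo}$-resolution $\bp = (p_{\alpha}): X \to \bX = (X_{\alpha}, p_{\alpha\alpha'}, \ka)$, and Theorem \ref{theorem 1.5} then promotes $\bp$ to a strong $\ANR_{\Bo}$-expansion. It remains to rewrite $\bX$ over a cofinite directed index set while preserving conditions $\mathrm{E}_{\Bo}1)$ and $\mathrm{SE}_{\Bo}2)$.

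For the reindexing I would let $\ka^{*}$ denote the set of all finite subsets $M$ of $\ka$, ordered by inclusion. This is directed (via unions) and cofinite, since a given $M$ has only the $2^{|M|}$ subsets as predecessors. Because $\ka$ itself is directed, every finite $M$ has an upper bound; by recursion on the cardinality of $M$, I choose $\mu(M) \in \ka$ so that $\mu(M)$ majorizes every element of $M$ together with every previously chosen $\mu(M')$ for $M' \subsetneq M$. Such a choice is possible since only finitely many constraints need be met. Setting $X^{*}_{M} = X_{\mu(M)}$, $p^{*}_{MM'} = p_{\mu(M)\mu(M')}$, and $p^{*}_{M} = p_{\mu(M)}$ then yields a morphism $\bp^{*}: X \to \bX^{*} = (X^{*}_{M}, p^{*}_{MM'}, \ka^{*})$ of $\bpro-\bTop_{\bBo}$ with every term in $\ANR_{\Bo}$.

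Checking that $\bp^{*}$ is a strong $\ANR_{\Bo}$-expansion is then direct. For $\mathrm{E}_{\Bo}1)$, given $h: X \to P$ with $P \in \ANR_{\Bo}$, use $\mathrm{E}_{\Bo}1)$ for $\bp$ to find $\alpha \in \ka$ and $f: X_{\alpha} \to P$ with $f\,p_{\alpha} \mathop{\simeq}\limits_{\Bo} h$; setting $M = \{\alpha\}$ and $f^{*} = f \circ p_{\alpha,\mu(M)}: X^{*}_{M} \to P$ gives $f^{*} p^{*}_{M} = f p_{\alpha} \mathop{\simeq}\limits_{\Bo} h$. For $\mathrm{SE}_{\Bo}2)$, given $f_{0},f_{1}: X^{*}_{M} \to P$ and a homotopy $S: X \times I \to P$ with $S(\cdot,i) = f_{i} p^{*}_{M}$, apply $\mathrm{SE}_{\Bo}2)$ to $\bp$ at the index $\mu(M)$ to obtain $\alpha' \geq \mu(M)$ and $H': X_{\alpha'} \times I \to P$ satisfying the endpoint and $\mathrm{rel}(X \times \partial I)$ conditions. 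Putting $M' = M \cup \{\alpha'\}$ forces $\mu(M') \geq \alpha'$ and $\mu(M') \geq \mu(M)$, so defining $H(z,t) = H'(p_{\alpha',\mu(M')}(z),t)$ on $X^{*}_{M'} \times I$ and using the identity $p_{\mu(M)\alpha'}\,p_{\alpha'\mu(M')} = p_{\mu(M)\mu(M')} = p^{*}_{MM'}$ yields the required endpoint identities, while $H(p^{*}_{M'} \times 1_{I}) = H'(p_{\alpha'} \times 1_{I}) \mathop{\simeq}\limits_{\Bo} S$ rel $X \times \partial I$ is immediate.

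The chief obstacle, minor but non-trivial, is the monotone choice of $\mu$: the bonding maps $p^{*}_{MM'}$ require $\mu(M) \leq \mu(M')$, which is not automatic and must be built into $\mu$ during the recursive construction. Without this monotonicity $\bX^{*}$ fails even to be an inverse system, and the endpoint identities in the verification of $\mathrm{SE}_{\Bo}2)$ collapse. Once this ingredient is in place, the remaining checks are routine diagram chases within $\bTop_{\bBo}$.
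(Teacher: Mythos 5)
Your proof is correct and follows the route the paper itself intends: Theorem \ref{theorem 1.2} supplies a fiber $\ANR_{\Bo}$-resolution, Theorem \ref{theorem 1.5} upgrades it to a strong $\ANR_{\Bo}$-expansion, and the standard Marde\v{s}i\'{c}--Segal reindexing over finite subsets with a monotone choice function $\mu$ makes the index set cofinite. The paper leaves this reindexing implicit, and you have correctly identified and supplied the one non-trivial ingredient (the monotonicity of $\mu$) together with the routine verification that $\mathrm{E}_{\Bo}1)$ and $\mathrm{SE}_{\Bo}2)$ transfer to the reindexed system.
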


In the proof of Theorem \ref{theorem 1.5} we need the following lemma.
\begin{lemma}\label{lemma 1.8}
Let $X$ be a topological space over metrizamble space $\Bo$, let $P,P^{'}$ be $\ANR_{\Bo}$-spaces, let $f:X\to P^{'}$, $h_0,h_1:P^{'}\to P$ be f.p. maps and let $S:X\times I\to P$ be a f.p. homotopy such that
\[S(x,0)=h_0f(x),~~~~~~x\in X,\]
\[S(x,1)=h_1f(x),~~~~~~x\in X.\]
Then there exists an $\ANR_{\Bo}$-space $P^{''}$, f.p. maps $f^{'}:X\to P^{''}$, $h:P^{''}\to P^{'}$ and a f.p. homotopy $K:P^{''}\times I\to P$ such that 
\begin{align*}
hf^{'}&=f,\\
K(z,0)&=h_0h(z),~~~~~z\in P^{''}\\
K(z,1)&=h_1h(z),~~~~~z\in P^{''}\\
K(f^{'}&\times 1_{I})=S.
\end{align*}
\end{lemma}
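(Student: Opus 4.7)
The plan is to realise $P''$ as a fibrewise path-space pullback in $\bTop_{\bBo}$. Concretely, set
$$P'' := \bigl\{(y,\omega) \in P' \times C_{\Bo}(I,P) : \omega(0)=h_0(y),\ \omega(1)=h_1(y)\bigr\},$$
where the ambient product is taken in $\bTop_{\bBo}$, i.e.\ is the pullback over $\Bo$. Define $h:P''\to P'$ as the projection $(y,\omega)\mapsto y$, let $K:P''\times I\to P$ be the evaluation $K((y,\omega),t)=\omega(t)$, and put $f':X\to P''$, $f'(x):=(f(x),S(x,\cdot))$. The path $t\mapsto S(x,t)$ lies in $C_{\Bo}(I,P)$ because fibre-preservation of $S$ forces it to be constant in $\Bo$, and its endpoints are $h_0 f(x)$ and $h_1 f(x)$ by hypothesis, so $f'(x)\in P''$. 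The four required identities $hf'=f$, $K(z,0)=h_0 h(z)$, $K(z,1)=h_1 h(z)$, and $K(f'\times 1_I)=S$ then follow directly from the definitions.

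The essential content of the lemma is to prove $P''\in\ANR_{\Bo}$. The ambient space $E:=P'\times C_{\Bo}(I,P)$ is already $\ANR_{\Bo}$: its second factor is $\ANR_{\Bo}$ by Proposition \ref{proposition 0.5} applied to the compact metric space $Z=I$, and the fibrewise product of two $\ANR_{\Bo}$-spaces is again $\ANR_{\Bo}$ (which one verifies by combining closed fibrewise embeddings of the two factors into slabs $\Bo\times K_i$ with $K_i$ convex, as in the proof of Proposition \ref{proposition 0.3}). I then verify that $P''$ is $\ANE_{\Bo}$ and conclude by Proposition \ref{proposition 0.1}.

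For the $\ANE_{\Bo}$ verification, let $A\subseteq M$ be closed with $M\in\M_{\Bo}$ and let $g=(g_1,g_2):A\to P''\subseteq E$ be a f.p.\ map. Independently extend $g_1,g_2$ to f.p.\ maps $\tilde g_1:U_1\to P'$ and $\tilde g_2:U_2\to C_{\Bo}(I,P)$ on neighbourhoods of $A$ in $M$, using the $\ANE_{\Bo}$ property of $P'$ and $C_{\Bo}(I,P)$. On $A$ the endpoint identities $\tilde g_2(a)(0)=h_0\tilde g_1(a)$ and $\tilde g_2(a)(1)=h_1\tilde g_1(a)$ hold, but they need not persist on $U:=U_1\cap U_2$. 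The decisive step is to repair this failure: passing to the adjoint $\hat g_2:U\times I\to P$ of $\tilde g_2$, I apply Proposition \ref{proposition 0.4} with the $\ANR_{\Bo}$-space $P$ as target, the pair $(h_0\tilde g_1,h_1\tilde g_1)$ of f.p.\ maps $U\to P$ as endpoints, and $\hat g_2|_{A\times I}$ as the prescribed homotopy on $A$. This yields a neighbourhood $V\subseteq U$ of $A$ and a f.p.\ homotopy $\tilde H:V\times I\to P$ from $h_0\tilde g_1|_V$ to $h_1\tilde g_1|_V$ extending $\hat g_2|_{A\times I}$. Its adjoint $\check g_2:V\to C_{\Bo}(I,P)$, paired with $\tilde g_1|_V$, provides the sought extension $V\to P''$ of $g$. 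The main obstacle is precisely this endpoint repair, which is exactly what Proposition \ref{proposition 0.4} was designed to handle.
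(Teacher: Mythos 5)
Your proof is correct and follows essentially the same route as the paper: the same fibrewise path-space $P''\subseteq P'\times_{\Bo}C_{\Bo}(I,P)$, the same maps $h$, $K$, $f'$, and the same use of Proposition \ref{proposition 0.4} to repair the endpoint conditions when extending a map $A\to P''$ to a neighbourhood. The only (harmless) deviations are that you first extend the second coordinate $g_2$ and then use only its restriction back to $A\times I$, and that you note the ambient fibrewise product is $\ANR_{\Bo}$ --- neither step is needed, and the paper extends only the first coordinate, applying Proposition \ref{proposition 0.4} directly to the adjoint of $g_2$ on $A\times I$.
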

\begin{proof}
Let $S:X\times I\to P$ be a map such that $S(x,0) = (h_0 ~ f)(x)$, $S(x,1)=(h_1 ~ f)(x)$ and $\pi_P ~ S =\pi_{X\times I}$.  Consider the subspace $C_{\Bo}(I,P)$ of the space $C(I,P)$. Let $\pi_{C_{\Bo}(I,P)}:C_{\Bo}(I,P)\to \Bo$ be the map given by
$\pi_{C_{\Bo}(I,P)}(\varphi)=\pi_{P}(\varphi(t)).$

Consequently, $C_{\Bo}(I,P)$ is a space over $\Bo$. The f.p. map $S:X\times I\to P$ defines the map $s:X\to C_{\Bo}(I,P)$ such that $(s(x))(t) = S(x,t)$, $x\in X$, $t\in I$. The image of the point $x\in X$, $s(x)\in C_{\Bo}(I,P)$, because $\pi_{P} ~ s(x):I\to \Bo$ is a constant map. Indeed,
\[(\pi_P ~ s(x))(t)=\pi_P(s(x))(t)=\pi_P(S(x,t))=\pi_{X\times I}(x,t)=\pi_X(x)\]
for every $t\in I$.

For each $x\in X$ we have 
\[(\pi_{C_{\Bo}(I,P)} ~ s)(x)=(\pi_{C_{\Bo}(I,P)}(s(x))=\pi_{P}(s(x))(t)=\] 
\[=\pi_{P}(S(x,t))=\pi_{X\times I}(x,t)=\pi_X(x).\] 

Thus, $\pi_{C_{\Bo}(I,P)} ~ s=\pi_X$. Hence, $s:X\to C_{\Bo}(I,P)$ is a f.p. map. For all $x\in X$ we have $$(s(x))(0) = S(x,0) = (h_0 ~ f)(x)$$ and $$(s(x))(1)=S(x,1) = (h_1 ~ f)(x).$$

Let $P^{'}\times_{\Bo}C_{\Bo}(I,P) =\{(y,\varphi)| \pi_{P^{'}}(y)=\pi_{C_{\Bo}(I,P)}(\varphi)\}$. The map $f^{'}:X\to P^{'}\times_{\Bo}C_{\Bo}(I,P)$, given by $f^{'}(x) =(f(x),s(x))$, is a f.p. map. Let $\pi_{P^{'}\times_{\Bo}C_{\Bo}(I,P)}:P^{'}\times_{\Bo}C_{\Bo}(I,P)\to \Bo$ be a map defined by
\[\pi_{P^{'}\times_{\Bo}C_{\Bo}(I,P)}(y,\varphi)=\pi_{P^{'}}(y)=\pi_{C_{\Bo}(I,P)}(y).\]
Then we have
\[\pi_{P^{'}\times_{\Bo}C_{\Bo}(I,P)} ~ f^{'}=\pi_{P^{'}\times_{\Bo}C_{\Bo}(I,P)}(f(x),s(x))=\pi_{P^{'}}(f(x))=\pi_{X}(x).\]

Thus, $\pi_X=\pi_{P^{'}\times_{\Bo}C_{\Bo}(I,P)} ~ f^{'}$.

It is clear that the first projection $h:P^{'}\times_{\Bo}C_{\Bo}(I,P)\to P^{'}$ is a f.p. map and $h ~ f^{'}=f$.

We define the subset $P^{''}$ of $P^{'}\times_{\Bo}C_{\Bo}(I,P)$ be the following way:
\[P^{''}=\{(y,\varphi)\in P^{'}\times_{\Bo}C_{\Bo}(I,P)|\varphi(0)=h_0(y), h_1(y)=\varphi(1)\}.\]
Let $K:P^{'}\times_{\Bo} C_{\Bo}(I,P)\times I\to P$ be a map given by formula
\[K((y,\varphi),t)=\varphi(t), y\in P^{'}, \varphi \in C_{\Bo}(I,P), t\in P.\]
The restriction of $K$ on $P^{''}\times I$ again denote by $K:P^{''}\times I\to P.$ This map is a f.p. homotopy between $h_0 ~ h_{|P^{''}}$ and $h_1 ~ h_{|P^{''}}$.

Indeed, for every $(y,\varphi)\in P^{''}$ and $t\in I$ we have
\[K((y,\varphi),0)=\varphi(0)=h_0(y)=h_0 ~ h(y,\varphi),\]
\[K((y,\varphi),1)=\varphi(1)=h_1(y)=h_1 ~ h(y,\varphi),\]
\[\pi_{P^{'}\times_{\Bo} C_{\Bo}(I,P)\times I}((y,\varphi),t)=\pi_{P^{'}\times_{\Bo} C_{\Bo}(I,P)}(y,\varphi)=\] \[=\pi_{P^{'}}(y)=\pi_{C_{\Bo}(I,P)}(\varphi)=\pi_{P}(\varphi(t))=\pi_{P}(K(y,\varphi),t).\]
%(4)%%%%%%%%%%%%%%%%
Note that for each $x\in X$ and $t\in I$ 
\[K(f^{'}\times 1_{I})(x,t)=K((f(x),s(x)),t)=(s(x))(t)=(S(x,t)).\]
Hence, $K(f^{'}\times 1_{I})=S.$

We shall prove that $P^{''}\in \ANR_{\Bo}$. Now suppose that $A$ is a closed subspace of a space $Z$ over $\Bo$ and $l:A\to P^{''}$ is a map such that $\pi_A=\pi_{Z|A}=\pi_{P} ~ l.$

Denote by $L:A\times I\to P$ the map defined by
\[L(a,t)=(h^{'} ~ l(a))(t), (a,t)\in  A\times I,\]
where $h^{'}$ is the second projection $P^{'}\times_{\Bo} C_{\Bo}(I,P)\to C_{\Bo}(I,P)$. It is clear that $L$ is a f.p. map. Indeed,
\[(\pi_{P} ~ L)(a,t)=\pi_{P}(L(a,t))=\pi_{P}((h^{'} ~ l(a))(t)=\]
\[=\pi_{C_{\Bo}(I,P)}(h^{'}(l(a)))=\pi_{A}(a)=\pi_{A\times I}(a,t).\]

The map $L$ is a f.p. homotopy between $h_0 ~ h ~ l$ and $h_1 ~ h ~ l$. Indeed, 
\[L(a,0)=(h^{'} ~ l(a))(0)=h_0 ~ h ~ l(a),~~~~a\in A\]
and
\[L(a,1)=(h^{'} ~ l(a))(1)=h_1 ~ h ~ l(a),~~~~a\in A.\]

Observe that, since $P^{'}\in \ANR_{\Bo}$ and $h ~ l:A\to P^{'}$ is a f.p. map, there is a neighbourhood $U$ of $A$ in $Z$ and there exists a f.p. map $\tilde{l}^{'}:U\to P^{'}$ such that $\tilde{l}^{'}_{|_A}=h ~ l$.

By Preposition \ref{proposition 0.4} there exists a neighbourhood $V$ of $A$ in $U$ and a f.p. homotopy $\tilde{L}:V\times I\to P$ between $h_0 ~ \tilde{l}^{'}_{|V}$ and $h_1 ~ \tilde{l}^{'}_{|V}$. Also note that $\tilde{L}(a,t)=L(a,t)$ for each $a\in A$ and $t\in I$. Let $\tilde{l}^{''}$ be a f.p. map $\tilde{l}^{''}:V\to C_{\Bo}(I,P)$, given by $(\tilde{l}^{''}(z))(t)=\tilde{L}(z,t)$, $z\in V, t\in I$. For every $a\in A$ we have $$(\tilde{l}^{''}(a))(t)=\tilde{L}(a,t)=L(a,t)=(h^{'} ~ l(a))(t).$$ 

Consequently, $\tilde{l}^{''}_{|{A}}=h^{'} ~ l$. Now define the f.p. map $\tilde{l}:V\to P^{'}\times_{\Bo} C_{\Bo}(I,P)$ by the formula $$\tilde{l}(z)=(\tilde{l}^{'},\tilde{l}^{''}),~~z\in V.$$ 

For each $z\in V$ we have
\[(\tilde{l}^{''}(z))(0)=\tilde{L}(z,0)=h_0 ~ \tilde{l}^{'}(z),\]
\[(\tilde{l}^{''}(z))(1)=\tilde{L}(z,1)=h_1 ~ \tilde{l}^{'}(z).\]
Consequently, $\tilde{l}:V\to P^{''}$ is an extension of the f.p. map $l:A\to P^{''}$. This fact completes the proof of lemma \ref{lemma 1.8}.
\end{proof}
%%%%%%%%%%%%%%%%%%%%%%%%%%%%
%%(5)%%%%%%%%%%%%%%
\begin{lemma}\label{lemma 1.9}
Let $\bp:X\to \bX$ be a resolution over $\Bo$ and let $\alpha,P,f_0,f_1$ and $F$ be as in $\SE_{\Bo}2)$. Then for every open covering $\mathscr{U}$ of $P$, there exist a $\alpha^{'}\geq \alpha$ and a f.p. homotopy $H:X_{\alpha^{'}}\times I\to P$ such that
\begin{align*}
H(y,0)&=f_0 ~ p_{\alpha\alpha^{'}}(y),~~~~~~~~~y\in X_{\alpha^{'}}\\
H(y,1)&=f_1 ~ p_{\alpha\alpha^{'}}(y),~~~~~~~~~y\in X_{\alpha^{'}}\\
(S,H(1&\times p_{\alpha^{'}}))\leq \mathscr{U}
\end{align*}
\end{lemma}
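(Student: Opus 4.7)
The plan is to use Lemma \ref{lemma 1.8} to factor the given data through a convenient $\ANR_{\Bo}$-space, apply condition $R_{\Bo}1)$ of the resolution to lift approximately through the inverse system, and then correct the resulting endpoints using the $\ANR_{\Bo}$-property of $P$.

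First, apply Lemma \ref{lemma 1.8} with $P'=X_\alpha$, $f=p_\alpha$, $h_0=f_0$, $h_1=f_1$, and $S$ taken to be the given homotopy $F$. This produces an $\ANR_{\Bo}$-space $P''$, f.p. maps $f':X\to P''$ and $h:P''\to X_\alpha$ with $hf'=p_\alpha$, and a f.p. homotopy $K:P''\times I\to P$ satisfying $K(\cdot,0)=f_0 h$, $K(\cdot,1)=f_1 h$, and $K\circ(f'\times 1_I)=F$. By continuity of $K$ and compactness of $I$, choose an open cover $\mathscr{V}$ of $P''$ such that whenever $z,z'$ lie in a common member of $\mathscr{V}$ the points $K(z,t),K(z',t)$ lie in a common member of a preselected refinement $\mathscr{U}^*$ of $\mathscr{U}$, with $\mathscr{U}^*$ fine enough to absorb the endpoint correction described below. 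Applying $R_{\Bo}1)$ to $f':X\to P''$ with cover $\mathscr{V}$, and replacing the resulting index by a common upper bound with $\alpha$, produces $\alpha_0\geq\alpha$ and a f.p. map $g:X_{\alpha_0}\to P''$ for which $f'$ and $gp_{\alpha_0}$ are $\mathscr{V}$-near. Set $\widetilde{H}=K\circ(g\times 1_I):X_{\alpha_0}\times I\to P$; then $\widetilde{H}\circ(p_{\alpha_0}\times 1_I)$ is $\mathscr{U}^*$-near $F$, but the endpoints of $\widetilde{H}$ are $f_0 hg$ and $f_1 hg$, not the required $f_0 p_{\alpha\alpha_0}$ and $f_1 p_{\alpha\alpha_0}$.

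The main obstacle is thus the endpoint correction. Since $f_i hg p_{\alpha_0}$ and $f_i p_{\alpha\alpha_0}p_{\alpha_0}=f_i p_\alpha$ are $\mathscr{U}^*$-near as f.p. maps $X\to P$ for $i=0,1$, condition $R_{\Bo}2)$ applied to the $\ANR_{\Bo}$-space $P$ with a suitable refinement of $\mathscr{U}$ yields a common index $\alpha'\geq\alpha_0$ for which $f_i hg p_{\alpha_0\alpha'}$ and $f_i p_{\alpha\alpha'}$ are very $\mathscr{U}$-near on $X_{\alpha'}$. Proposition \ref{proposition 0.3} then provides short $\mathscr{U}$-homotopies $L^i:X_{\alpha'}\times I\to P$ from $f_i p_{\alpha\alpha'}$ to $f_i hg p_{\alpha_0\alpha'}$ ($i=0,1$). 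The desired $H:X_{\alpha'}\times I\to P$ is produced by concatenating $L^0$ on a narrow strip $[0,\varepsilon]$, a reparametrization of $\widetilde{H}\circ(p_{\alpha_0\alpha'}\times 1_I)$ on $[\varepsilon,1-\varepsilon]$, and $L^1$ run backwards on $[1-\varepsilon,1]$; by construction the endpoints of $H$ are $f_0 p_{\alpha\alpha'}$ and $f_1 p_{\alpha\alpha'}$ on the nose. The hardest part is choosing the refinement $\mathscr{U}^*$, the cover $\mathscr{V}$, and the parameter $\varepsilon$ so that the time-reparametrization in the middle piece and the short adjustments on the narrow end-strips jointly preserve the $\mathscr{U}$-nearness of $H\circ(p_{\alpha'}\times 1_I)$ with $F$; this will require an initial star-refinement of $\mathscr{U}$ in $P$ and careful propagation of cover fineness through the three intermediate steps.
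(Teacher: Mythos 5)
Your overall strategy is the same as the paper's (factor through an auxiliary $\ANR_{\Bo}$-space via Lemma \ref{lemma 1.8}, lift with R$_{\Bo}1)$, repair the endpoints with R$_{\Bo}2)$ and Proposition \ref{proposition 0.3}, then concatenate), but your very first step has a genuine gap. You apply Lemma \ref{lemma 1.8} with $P'=X_\alpha$, $f=p_\alpha$, $h_0=f_0$, $h_1=f_1$. Lemma \ref{lemma 1.8} requires $P'$ to be an $\ANR_{\Bo}$-space, and in Lemma \ref{lemma 1.9} the resolution $\bp:X\to\bX$ is arbitrary, so $X_\alpha$ need not be an $\ANR_{\Bo}$. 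This is not cosmetic: the proof of Lemma \ref{lemma 1.8} uses $P'\in\ANR_{\Bo}$ to extend $h\circ l$ over a neighbourhood and thereby show $P''\in\ANR_{\Bo}$, and you need $P''\in\ANR_{\Bo}$ in the next step, since R$_{\Bo}1)$ only applies to f.p.\ maps into $\ANR_{\Bo}$-spaces. The paper sidesteps this by never feeding $X_\alpha$ into Lemma \ref{lemma 1.8}: it sets $P'=P\times_{\Bo}P$ (an $\ANR_{\Bo}$), takes $f$ to be the diagonal $(f_0p_\alpha,f_1p_\alpha):X\to P\times_{\Bo}P$ and $h_0,h_1$ the two projections, so that every space entering Lemma \ref{lemma 1.8} and R$_{\Bo}1)$ is an $\ANR_{\Bo}$. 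You should restructure your first step this way; the rest of your endpoint-correction argument then goes through essentially as in the paper.

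A second, smaller but still real, problem is your final concatenation with a single constant $\varepsilon$. To verify $(F,H(p_{\alpha'}\times 1_I))\leq\mathscr{U}$ on the middle strip you must compare $F(x,t)$ with $F(x,\tau)$ for a reparametrized time $\tau$ displaced from $t$ by up to about $2\varepsilon$; since $X$ is an arbitrary space and $F$ merely continuous, no single $\varepsilon>0$ controls this displacement for all $x$ simultaneously. The paper (following Marde\v{s}i\'{c}'s argument in [M$_1$]) chooses a stacked refinement of $G^{-1}(\mathscr{V}')$ over $P''$ and a continuous function $\varphi:P''\to I$ adapted to it, and lets the width of the end strips at $y$ be $\varphi(h p_{\alpha''\alpha'}(y))$; the variable width is what makes the time distortion small relative to the local oscillation of $G$, hence of $F$. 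You correctly flag this as the hardest part, but as written the fixed-$\varepsilon$ version does not close.
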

\begin{proof}
Let $\mathscr{U}$ be an open covering of $P$. There exists an open star-refinement $\mathscr{U}^{'}$ of $\mathscr{U}$. Now we choose an open covering $\mathscr{V}$ of $P$ such that the assertions of Proposition \ref{proposition 0.3} hold for $\mathscr{U}^{'}$. We can assume that $\mathscr{V}$ is a star-refinement of $\mathscr{U}^{'}$. We choose $\mathscr{V}^{'}$ so that $\mathscr{V}^{'}$ is a star-refinement of $\mathscr{V}$ and R$_{\Bo}2)$ holds for $P$, $\mathscr{V}$ and $\mathscr{V}^{'}$.

Let $P^{'}=P\times_{\Bo}P$. By $g_0,g_1:P^{'}\to P$ denote the two projections. Let $f:X\to P^{'}$ be the diagonal product of f.p. maps $f_0 ~ p_{\alpha}:X\to P$ and $f_1 ~ p_{\alpha}:X\to P$. It is clear that $g_0 ~ f=f_0 ~ p_{\alpha}$, $g_1 ~ f=f_1 ~ p_{\alpha}$, $F_0=g_0 ~ f$ and $F_1=g_1 ~ f$.

By the lemma \ref{lemma 1.8} there exists an $\ANR_{\Bo}$-space $P^{''}$, f.p. maps $f^{'}:X\to P^{''}$, $g:P^{''}\to P^{'}$ and a f.p. homotopy $G:P^{''}\times I\to P$ such that 
\[g ~ f^{'}=f,\]
\[G_0=g_0 ~ g, G_1=g_1 ~ g,\]
\[G(f^{'}\times 1)=F.\]

We choose for the open covering $G^{-1}(\mathscr{V}^{'})$ of $P^{''}\times I$ a refinement, which is a stacked covering $\kv$ of  $P^{''}\times I$, given by a locally finite open covering $\mathscr{W}$ of $P^{''}$ and by finite open coverings $\mathscr{J}_\mathscr{W}, W\in \kw$ of $I$.

By condition R$_{\Bo}1)$ there exists a $\alpha^{''}\geq \alpha$ and f.p. mapping $h:X_{\alpha^{''}}\to P^{''}$ such that \[(f^{'},h  ~  p_{\alpha^{''}})\leq \kw\]

It is clear that for any $W\in \mathscr{W}$, $W\times 0\subseteq W\times J$, where $J\in \mathscr{J}_{\mathscr{W}}$ and $W\times J\subset G^{-1}(V^{'})$ for some $V^{'}\in \mathscr{V}^{'}$.

Note that
\[g_0g(W)=G_0(W)=G(W\times 0)\subseteq G(W\times J)\subseteq V^{'}.\]

Hence, $g_0g(\kw)$ refines $\kv^{'}$ and $(g_0~g~f^{'},g_0~g~h~p_{\alpha^{''}})\leq \kv^{'}.$

From the equalities 
\[g_0 ~ g f^{'}=g_0 f=f_0 p_{\lambda}=f_0 p_{\alpha \alpha^{'}}p_{\alpha^{''}}\]
it follows that 
\[(g_0 g h p_{\alpha^{''}}, f_0 ~ p_{\alpha \alpha^{''}} ~ p_{\alpha^{''}})\leq \mathscr{V}^{'}.\]

We also can claim that
\[(g_1 g h p_{\alpha^{''}}, f_1 ~ p_{\alpha \alpha^{''}} ~ p_{\alpha^{''}})\leq \mathscr{V}^{'}.\]

By condition R$_{\Bo}2)$ there is a $\alpha^{'}\geq \alpha^{''}$ such that 
\[(g_0 g h p_{\alpha^{''}\alpha^{'}}, f_0 ~ p_{\alpha \alpha^{'}})\leq \mathscr{V}\]
and 
\[(g_1 g h p_{\alpha^{''}\alpha^{'}}, f_1 ~ p_{\alpha \alpha^{'}})\leq \mathscr{V}.\]

Besides, there exist $\mathscr{U}^{'}$-f.p. homotopies $K,L:X_{\alpha^{'}}\times I\to P$ such that $K_0=f_0 p_{\alpha\alpha^{'}}, K_1=g_0 g h p_{\alpha^{''}\alpha^{'}}$, $L_0=f_1p_{\alpha \alpha^{'}}$ and $L_1=g_1ghp_{\alpha^{''}\alpha^{'}}.$

Note that for any $t\in I$ the pairs $(f^{'}(x),t)$ and $(hp_{\alpha^{''}}(x),t)$ belong to some elements of $\kv$ and consequently to $G^{-1}(V^{'})$ for some $V^{'}\in \mathscr{V}^{'}$. Thus $G(f^{'}\times 1_{I})$ and $G(h p_{\alpha^{''}}\times 1_{I})$ are $\mathscr{V}^{'}$-near. Hence,
\[(G(f^{'}\times 1_{I}), G(hp_{\alpha^{''}}\times 1_{I}))\leq \mathscr{V}.\]

Now we define f.p. homotopy $H:X_{\alpha^{'}}\times I\to P$ by formulas
\begin{equation*}
H(y,t)=\begin{cases}
K(y,\frac{t}{\varphi(z)}),& 0\leq t \leq \varphi(z), \\
G(z,\frac{t-\varphi(z)}{1-2\varphi(z)}),& \varphi\leq t\leq 1-\varphi(z),\\
L(y,\frac{1-t}{\varphi(z)}),& 1-\varphi(z)\leq t\leq 1,
\end{cases}
\end{equation*}
where $z=hp_{\alpha^{''}\alpha^{'}}(y)$ and $\varphi:P^{''}\to I$ is a continuous map defined in [M$_1$].

As in [M$_1$] we can prove that for every $(x,t)\in X\times I$, there is a $U\in \ku$ such that \[F(x,t), H(p_{\alpha}(x),t)\in U.\]
\end{proof}
%\begin{proof}[\textbf{Proof of Theorem 2}]
%The condition E$_{\Bo}1)$ is an immediate consequence of Preposition 7. The proof of condition E$_{\Bo}2)$ depends on Lemma 2.
%\end{proof}

%Combining Theorem 1 with Theorem 2, one immediately obtains this result.

%\begin{corollary}\label{corollary 1} 
%The homotopy category $[\ANR_{\Bo}]$ is a dense subcategory ([MS],Ch.1) of the homotopy category $[\bTop_{\bBo}]$.
%\end{corollary} 
%%%%%%%%%%%%%%%%%%%%
%%%page 6

\begin{proof}[\textbf{Proof of Theorem \ref{theorem 1.5}}]. First prove the following condition.

E$_{\Bo}1)$. Let $\mathscr{U}$ be a open covering of $P$. Consider open covering $\mathscr{V}$ as in Proposition \ref{proposition 0.3}. By R$_{\Bo}1)$ there exist an index $\alpha\in \ka$ and a f.p. mapping $h:X_{\alpha}\to P$ which satisfies condition $(hp_{\alpha},f)\leq \mathscr{V}$. Thus, by the choice of $\mathscr{V}$, $f \mathop {\simeq}\limits_{\Bo}h ~ p_{\alpha}$.

S$_{\Bo}2)$. Let $\mathscr{U}$ be a open covering $\mathscr{U}$ of. Consider a covering $\mathscr{V}$ as in Proposition \ref{proposition 0.3}. By Lemma \ref{lemma 1.8} there exist a $\alpha^{'}\geq \alpha$ and f.p. homotopy $H:X_{\alpha^{'}}\times I\to P$ which satisfies 
\[H(z,0)=f_0p_{\alpha \alpha^{'}}(z),~~~~z\in X_{\lambda^{'}},\]
\[H(z,1)=f_1p_{\alpha \alpha^{'}}(z),~~~~z\in X_{\lambda^{'}},\]
\[(S,H(1\times p_{\alpha^{'}}))\leq \mathscr{V} .\]

Consider the spaces $Z=X\times I$ and $A=X\times \partial I$ over $\Bo$ and f.p. mappings $h_0=F$ and $h_1=H(p_{\alpha^{'}}\times 1)$. 

Note that $h_{0_|A}=h_{1_|A}$. Indeed, for each $x\in X$ 
\[h_0(x,0)=F(x,0)=f_0p_{\alpha}(x)=f_0p_{\alpha \alpha^{'}}p_{\alpha^{'}}(x)=H(p_{\alpha^{'}} (x),0)=h_1(x,0).\]

Analogously, for each $x\in X$ we have 
\[h_0(x,1)=F(x,1)=f_1p_{\alpha}(x)=f_1p_{\alpha \alpha^{'}}p_{\alpha^{'}}(x)=H(p_{\alpha^{'}} (x),1)=h_1(x,0).\]

Consequently, $(h_0,h_1)\leq \mathscr{V}$. By Preposition \ref{proposition 0.3} there exists a f.p. homotopy $\rel (X\times \partial I)$, which connects $F$ and $H(p_{\alpha^{'}}\times 1_{I})$.
\end{proof}
%%%%%______>(6)
\section{On Fiber Strong Shape Category }

Let $\Delta^{n}$ be the standard $n$-simplex, i.e. the set of all points $t=\{ t=(t_0,t_1, \cdots,t_n)\in R^{n+1}\}$, where $t_0\geq 0, \cdots, t_n\geq 0$ and $t_0+ \cdots +t_n=1$.

For $n>0$ and $0\leq j\leq n$ there exist $\partial^{n}_{j}:\Delta^{n-1}\to \Delta^{n}$ $j$-th face operators and for $n\geq 0$ and $0\leq j\leq n$ there exist $\sigma^{n}_{j}:\Delta^{n+1}\to \Delta^{n}$ $j$-th degeneracy operators given by formulas
\[\partial^{n}_{j}(t_0,\cdots,t_{n-1})=(t_0,\cdots,t_{j-1},0,t_j,  \cdots, t_{n-1}), \]
\[\sigma^{n}_{j}(t_0, \cdots,t_{n+1})=(t_0, \cdots,t_{j-1},t_j+t_{j+1},t_{j+2},  \cdots, t_{n+1}). \]

Let $\kb$ be a directed set. By $\kb^{n}$ denote the set of all sequences $\hm{\beta}=(\beta_0,\cdots,\beta_n)$, $\beta_0\leq \cdots\leq\beta_n$ of elements of $\kb$.

For $n>0$ and $0\leq j\leq n$ we consider the $j$-th face operator $d^{n}_{j}:\kb^{n}\to \kb^{n-1}$ given by formula 
\[d^{n}_{j}(\beta_0, \cdots,\beta_n)=(\beta_0, \cdots,\beta_{j-1},\beta_{j+1}, \cdots,\beta_n)\]
and for $n\geq 0$ and $0\leq j\leq n$ by $s^{n}_{j}$ we denote $j$-th degeneracy operator $s^{n}_{j}:\kb^{n}\to \kb^{n+1}$ given by formula
\[s^{n}_{j}(\beta_0, \cdots,\beta_n)=(\beta_0, \cdots,\beta_{j},\beta_{j}, \cdots,\beta_n).\]

For simplicity the images $d^{n}_{j}(\hm{\beta})$ and $s^{n}_{j}(\hm{\beta})$ we denote by $\hm{\beta}_{j}$ and $\hm{\beta}^{j}$, respectively.

Let $\bX=(X_{\alpha}, p_{\alpha \alpha^{'}}, \ka)$ and $\bY=(Y_{\beta}, p_{\beta \beta^{'}}, \kb)$ be the objects of category $\bpro-\bTop_{\bBo}$.

A coherent map $f:{\bf X}\to \bY$ over $\rm B_0 $ or fiber preserving (f.p) coherent map consists of function $\varphi: \kb^{n}\to \ka$ and fiber preserving maps $f_{\hm{\beta}}:X_{\varphi(\hm{\beta})}\times \Delta^{n}\to Y_{\beta_0}$, $\hm{\beta}=(\beta_0, \cdots,\beta_n)\in \kb^{n}$, $n\geq 0$ having the following properties:

\begin{itemize}
\item [i).]The function $\varphi$, which assigns to every $n\geq 0$ and $\hm{\beta}=(\beta_0, \cdots,\beta_n)\in \kb^{n}$ an element $\varphi(\hm{\beta})=\varphi(\beta_0, \cdots,\beta_n)\in \ka$, satisfies condition:
\[
\varphi(\hm{\beta})\geq \varphi(\hm{\beta}_{j}),~~~~~~~0\leq j\leq n, n>0.
\]

\item[ii).] For every $n\geq 0$ and every $\bm{\beta}=(\beta_0, \cdots,\beta_n)\in \kb^{n}$ the fiber preserving maps $f_{\hm{\beta}}:(X_{\varphi(\hm{\beta})}\times \Delta^{n},\pi_{X_{\varphi(\hm{\beta})}\times \Delta^{n}})\to (Y_{\beta_0},\pi_{Y_{\beta_0}})$ satisfies condition:
\[
f_{\hm{\beta}}(x,\partial^{n}_{j}t)=\begin{cases}
q_{\beta_0\beta_1}f_{\hm{\beta_0}}(p_{\varphi(\hm{\beta_0})}p_{\varphi(\hm{\beta})}(x),t),~~~~~j=0\\
f_{\hm{\beta_j}}(p_{\varphi(\hm{\beta_j})}p_{\varphi(\hm{\beta})}(x),t),~~~~~0\leq j\leq n,
\end{cases}
\]
\end{itemize}
%%%%end page 7
%%%begin page 8
where $x\in X_{\varphi(\hm{\beta})}$, $t\in \Delta^{n-1}$, $n\geq 0$, $X_{\varphi(\hm{\beta})}\times \Delta^{n}$ is the space over $\rm B_0 $ with projection $\pi_{X_{\varphi(\hm{\beta})}\times \Delta^{n}}:X_{\varphi(\hm{\beta})}\times \Delta^{n}\to {\rm B_0} $ given by formula
\[\pi_{X_{\varphi(\hm{\beta})}\times \Delta^{n}}(x,t)=\pi_{X_{\varphi(\hm{\beta})}}(x),~~~~~~x\in X_{\varphi(\hm{\beta})}, t\in \Delta ^{n}\]
and 
\[f_{\hm{\beta}}(p_{\varphi(\hm{\beta})\varphi(\hm{\beta_j})}(x),\sigma^{n}_{j}(t))=f_{\hm{\beta^{j}}}(x,t), 0\leq j\leq n, x\in X_{\varphi(\hm{\beta^{j}})}, t\in \Delta ^{n+1}, n\geq 0.\]

The identity coherent map $1_{{\bf X}}:{\bf X}\to {\bf X}$ over $\rm B_0 $ is given by formulas:
\[\varphi(\hm{\alpha})=\alpha_n,\bm{\alpha}=(\alpha_0, \cdots,\alpha_n)\in \ka^{n},\]
\[1_{\hm{\alpha}}(x,t)=p_{\alpha_0\alpha_n}(x),x\in X_{\alpha_{n}},t\in \Delta^{n},n\geq 0.\]

A coherent homotopy over $\rm B_0 $ or fiber preserwing (f.p.) homotopy $F:{\bf X}\times I\to \bY$ connecting f.p. coherent maps $f,f^{'}:{\bf X}\to \bY$, is a f.p. coherent map of ${\bf X}\times I=((X_{\alpha}\times I,\pi_{X_{\alpha}\times I}), p_{\alpha \alpha^{'}}\times 1_{I}, \ka)$ to $\bY$, given by a function $\Phi$ and by f.p. maps $F_{\hm{\beta}}:(X_{\varphi(\hm{\beta})}\times I\times \Delta_{n},\pi_{X_{\varphi(\hm{\beta^{j}})}\times I\times \Delta_{n}})\to (Y_{\beta_0},\pi_{Y_{\beta_0}})$, witch have i) and ii) properties and satisfy the conditions
\[\Phi(\hm{\beta})\geq \varphi(\hm{\beta}), \varphi^{'}(\hm{\beta}),\]
\[F_{\hm{\beta}}(x,0,t)=f_{\hm{\beta}}(p_{\varphi(\hm{\beta})\Phi(\hm{\beta})}(x),t),\]
\[F_{\hm{\beta}}(x,1,t)=f^{'}_{\hm{\beta}}(p_{\varphi^{'}(\hm{\beta})\Phi(\hm{\beta})}(x),t),\] 
where $x\in X_{\varphi(\hm{\beta})}, t\in \Delta^{n}, n\geq 0$.

As in \cite{78} we can prove

\begin{proposition}\label{2.1}
\textit{The f.p. coherent homotopy relation of f.p. coherent maps is an equivalence relation.}
\end{proposition}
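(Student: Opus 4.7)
The plan is to verify the three axioms of an equivalence relation for $\simeq_{\Bo}$ separately, following the scheme used by Lisica and Marde\v{s}i\'{c} \cite{78} for ordinary coherent maps, and checking at each step that every map and bonding morphism respects the projection to $\Bo$. Throughout I write a f.p. coherent map as $(\varphi, f_{\bm{\beta}})$ and a f.p. coherent homotopy as $(\Phi, F_{\bm{\beta}})$.

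\textbf{Reflexivity and symmetry.} For reflexivity, given $(\varphi, f_{\bm{\beta}}) \colon \bX \to \bY$, I set $\Phi := \varphi$ and define
\[
F_{\bm{\beta}}(x, s, t) := f_{\bm{\beta}}(x, t), \qquad (x, s, t) \in X_{\varphi(\bm{\beta})} \times I \times \Delta^{n}.
\]
Since $F_{\bm{\beta}}$ is constant in the $I$-coordinate, conditions i) and ii) as well as fiber-preservation are inherited from $f_{\bm{\beta}}$, and the endpoint identities at $s = 0, 1$ hold because $p_{\varphi(\bm{\beta})\varphi(\bm{\beta})} = 1$. For symmetry, given a f.p. coherent homotopy $(\Phi, F_{\bm{\beta}})$ from $f$ to $f'$, I put $\Phi' := \Phi$ and $F'_{\bm{\beta}}(x, s, t) := F_{\bm{\beta}}(x, 1-s, t)$. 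The substitution $s \mapsto 1-s$ does not touch the simplicial or the base coordinates, so the coherence identities and fiber-preservation are preserved, and the endpoints swap as required.

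\textbf{Transitivity (the main obstacle).} Given homotopies $(\Phi, F_{\bm{\beta}})$ from $f$ to $f'$ and $(\Psi, G_{\bm{\beta}})$ from $f'$ to $f''$, I construct a composite $(\Theta, H_{\bm{\beta}})$ from $f$ to $f''$. The index function $\Theta$ is built inductively on the length of $\bm{\beta}$: at each step choose $\Theta(\bm{\beta}) \in \ka$ satisfying $\Theta(\bm{\beta}) \geq \Phi(\bm{\beta}),\ \Psi(\bm{\beta})$, and $\Theta(\bm{\beta}_{j})$ for all $0 \leq j \leq n$, which is possible by directedness of $\ka$. Then define
\[
H_{\bm{\beta}}(x, s, t) := \begin{cases} F_{\bm{\beta}}(p_{\Phi(\bm{\beta})\Theta(\bm{\beta})}(x),\, 2s,\, t), & 0 \leq s \leq 1/2, \\ G_{\bm{\beta}}(p_{\Psi(\bm{\beta})\Theta(\bm{\beta})}(x),\, 2s-1,\, t), & 1/2 \leq s \leq 1. \end{cases}
\]
At $s = 1/2$ both branches equal $f'_{\bm{\beta}}(p_{\varphi'(\bm{\beta})\Theta(\bm{\beta})}(x), t)$, so $H_{\bm{\beta}}$ is continuous; it is fiber-preserving because each bonding map $p$ and each of $F_{\bm{\beta}}, G_{\bm{\beta}}$ already commutes with the projection to $\Bo$. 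The values at $s = 0, 1$ yield exactly the compatibility with $f$ and $f''$ demanded by the definition.

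\textbf{Where the real work lies.} The genuine difficulty is verifying the face and degeneracy identities ii) for $H_{\bm{\beta}}$. This is precisely where the inductive condition $\Theta(\bm{\beta}) \geq \Theta(\bm{\beta}_{j})$ is essential: together with the commuting squares $p_{\Phi(\bm{\beta}_{j})\Phi(\bm{\beta})}\, p_{\Phi(\bm{\beta})\Theta(\bm{\beta})} = p_{\Phi(\bm{\beta}_{j})\Theta(\bm{\beta}_{j})}\, p_{\Theta(\bm{\beta}_{j})\Theta(\bm{\beta})}$ (and its analogue for $\Psi$), they reduce the face identity for $H_{\bm{\beta}}$ to the known identities for $F$ and $G$, applied piecewise on the two halves of $I$; the degeneracy identity is handled in the same way. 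Since every bonding map and each of $F_{\bm{\beta}}, G_{\bm{\beta}}$ commutes with the projection to $\Bo$, the verification carried out in \cite{78} for the non-fibered category transfers verbatim to the fiber-preserving setting, completing the proof.
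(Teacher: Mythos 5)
Your proof is correct and follows exactly the route the paper intends: the paper gives no argument of its own for this proposition, saying only ``As in \cite{78} we can prove,'' and your reflexivity/symmetry/concatenation scheme with the inductively chosen index function $\Theta(\bm{\beta})\geq \Phi(\bm{\beta}),\Psi(\bm{\beta}),\Theta(\bm{\beta}_j)$ is precisely the Lisica--Marde\v{s}i\'{c} argument that citation points to, augmented by the (routine but necessary) check that all maps respect the projection to $\Bo$. No discrepancy to report.
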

%%%%end page8
%%%%begin page 9
A f.p. coherent map $f:{\bf X}\to \bY$ is called a special  f.p. coherent map or a special coherent map over $\rm B_0 $ if $\varphi(\hm{\beta})=\varphi(\beta_{n})$ for each $\hm{\beta} \in \kb^{n}$ and $\varphi_{|\kb}:\kb\to \ka$ is an increasing function.

The composition $h= g~f$ of special f.p. coherent maps over $\rm B_0 $ is defined as in \cite{78}.

A special f.p. coherent homotopy connecting two special f.p. coherent maps $f,f^{'}:{\bf X}\to \bY$ is a f.p. coherent homotopy $F:{\bf X}\times I\to \bY$ between $f$ and $f^{'}$ and at the same time it is a special f.p. coherent map.

Note that if the index set $\kb$ of $\bY$ is cofinite, then special f.p. coherent homotopy relation of special f.p. coherent maps is an equivalence relation. 

The proofs of the following proposition pass as in \cite{78}.

\begin{proposition}\label{2.2}
\textit{Let $f,f^{'}:{\bf X}\to \bY$, $g,g^{'}:\bY\to \bZ=((Z_{\gamma},\pi_{Z_{\gamma}}),r_{\gamma \gamma^{'}},\kc)$ be special f.p. coherent maps and let $F,G$ be special f.p. coherent homotopies connecting $f$ with $f^{'}$ and $g$ with $g^{'}$, respectively. If the index set $\kc$ is cofinite, then there is a special f.p. coherent homotopy connecting $g ~ f$ and $g^{'} ~ f^{'}$.}\qed
\end{proposition}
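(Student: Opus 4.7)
The plan is to construct the desired homotopy as the concatenation of two coherent homotopies sharing the common endpoint $g\circ f'$. The first piece is $H_1 = g\ast F:\bX\times I \to \bZ$, obtained by treating $g:\bY\to\bZ$ as constant in the $I$-direction and $F:\bX\times I\to \bY$ as a special f.p. coherent map from the inverse system $\bX\times I = ((X_\alpha\times I,\pi_{X_\alpha\times I}),\,p_{\alpha\alpha'}\times 1_I,\,\ka)$ to $\bY$, and then applying the composition of special coherent maps as defined in \cite{78}. The face identity ii) evaluated at $t=0$ and $t=1$ forces the endpoints of $H_1$ to be $g\circ f$ and $g\circ f'$. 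Symmetrically, viewing $f':\bX\to\bY$ as constant in $I$ and $G:\bY\times I\to \bZ$ as a special f.p. coherent map, I would form $H_2 = G\ast f':\bX\times I\to \bZ$, a special f.p. coherent homotopy from $g\circ f'$ to $g'\circ f'$.

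Next I would concatenate $H_1$ and $H_2$ at $t=1/2$. After choosing a common index function $\Psi:\bigsqcup_n \kc^n\to \ka$ dominating the index functions of both halves and pulling the two homotopies back along the appropriate bonding maps of $\bX$ so that both become defined on $X_{\Psi(\bm{\gamma})}\times I\times \Delta^n$, I set
\[
H_{\bm{\gamma}}(x,t,u) = \begin{cases}
(H_1)_{\bm{\gamma}}(x, 2t, u), & 0\leq t\leq 1/2,\\
(H_2)_{\bm{\gamma}}(x, 2t-1, u), & 1/2\leq t\leq 1.
\end{cases}
\]
The two pieces agree at $t=1/2$ because both reduce there to $(g\circ f')_{\bm{\gamma}}$ evaluated on the same simplex, and the endpoints at $t=0,1$ are $g\circ f$ and $g'\circ f'$, as required.

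The main obstacle is producing, at each stage, a single special coherent map from $\bX\times I$ to $\bZ$; equivalently, an index function $\Psi$ that is increasing on $\kc$ and simultaneously dominates the indices prescribed by $F$, $G$, $g$ and $f'$ on every face and every degeneracy of every $\bm{\gamma}\in \kc^n$. Cofiniteness of $\kc$ is precisely what makes this feasible: for each $\gamma\in\kc$ the set of predecessors in $\kc$ is finite, so the compatibility conditions entering the definition of $\Psi(\bm{\gamma})$ reduce at each step to a finite system that can be satisfied by choosing an upper bound in the directed set $\ka$, and the inductive construction of $\Psi$ goes through along the cofinite ordering of $\kc$. Once $\Psi$ is fixed, properties i) and ii) for $H$ follow directly from those of $H_1$ and $H_2$ via the face/degeneracy identities, and the fiber preservation of $H$ over $\Bo$ is inherited from the fiber preservation of $H_1$ and $H_2$ on each subinterval, exactly as in the non-fiber argument of \cite{78}.
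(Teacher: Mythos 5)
Your argument is correct and is essentially the same as the one the paper relies on: the paper gives no proof of Proposition \ref{2.2}, deferring to Lisica--Marde\v{s}i\'{c} \cite{78}, and their argument is precisely your two-step scheme (compose $g$ with $F$ to join $gf$ to $gf'$, compose $G$ with $f'$ to join $gf'$ to $g'f'$, then concatenate, with cofiniteness of $\kc$ used exactly where you use it, namely to build the common increasing index function that makes the special coherent homotopy relation transitive). No gaps worth flagging.
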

\begin{proposition}\label{2.3}
\textit{If $f:{\bf X}\to \bY$, $g:\bY\to \bZ$ and $h:\bZ\to \bW$ are special f.p. coherent maps of inverse systems of ${\bf Top}_{{\bf B_0}}$ over cofinite index sets, then there is a special f.p. coherent homotopy connecting $h(gf)$ with $(hg)f$.}\qed
\end{proposition}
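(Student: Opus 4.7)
The plan is to adapt the standard Lisica--Marde\v{s}i\'{c} associativity argument from \cite{78} to the fiber-preserving setting; the fiber structure will cause no extra difficulty because every subdivision and interpolation is carried out in simplex and parameter coordinates while the $X$-factor (and therefore the projection to $\Bo$) is left alone. The crucial preliminary observation is that both iterated composites $h(gf)$ and $(hg)f$ have the same increasing index function $\Phi = \varphi\circ\psi\circ\chi\colon\kd\to\ka$, where $\varphi\colon\kb\to\ka$, $\psi\colon\kc\to\kb$, $\chi\colon\kd\to\kc$ are the index functions of $f,g,h$. For every $(\delta_0,\ldots,\delta_n)\in\kd^n$ both composites deliver an f.p.\ map $X_{\Phi(\delta_n)}\times\Delta^n\to W_{\delta_0}$; they differ only in the subdivision of $\Delta^n$ used to glue the pieces—one groups $gf$ first, the other $hg$ first—and these two subdivisions agree on vertices.

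For the homotopy $H\colon\bX\times I\to\bW$, I take its index function to be $\Phi$ itself, so the required inequality $\Phi(\bm\delta)\ge\varphi\psi\chi(\bm\delta)$ is automatic and $H$ is again special. On $X_{\Phi(\delta_n)}\times I\times\Delta^n$ I construct the component $H_{\bm\delta}$ via the standard triangulation of the prism $I\times\Delta^n$ into $n+1$ top-dimensional simplices. On each prism simplex I use the appropriate composite cell of $f_{\bm\beta}$, $g_{\bm\gamma}$, $h_{\bm\delta}$ arranged so that the $t=0$ face of $I\times\Delta^n$ reproduces $(h(gf))_{\bm\delta}$ and the $t=1$ face reproduces $((hg)f)_{\bm\delta}$; matching across the internal walls of the prism triangulation is forced by property (ii) applied to $f,g,h$.

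The main routine task is the simplex-level bookkeeping: checking that adjacent prism cells agree on their common walls, and that the face identities of $H_{\bm\delta}$ under the $j$-th face operators of $\kd^n$ follow from those of $f$, $g$, $h$ together with the bonding maps $p_{\alpha\alpha'}$, $q_{\beta\beta'}$, $r_{\gamma\gamma'}$. These verifications are purely combinatorial and go through as in \cite{78}. Cofiniteness of $\kd$ is used exactly as in Proposition~\ref{2.2} to produce auxiliary indices consistently and to guarantee that the constructed coherent homotopy is itself special. The anticipated main obstacle is not conceptual but the subdivision bookkeeping that tracks which cell of the prism triangulation interpolates between the two associative parenthesizations; fiber-preservation, by contrast, is automatic, since each $f_{\bm\beta}, g_{\bm\gamma}, h_{\bm\delta}$ is f.p.\ by hypothesis and the triangulation is performed only in the $I\times\Delta^n$-coordinate, giving $\pi_{W_{\delta_0}}\circ H_{\bm\delta}=\pi_{X_{\Phi(\delta_n)}\times I\times\Delta^n}$ throughout. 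The resulting $H$ is the desired special f.p.\ coherent homotopy connecting $h(gf)$ to $(hg)f$.
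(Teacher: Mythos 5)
Your proposal is correct and takes essentially the same route as the paper, which offers no argument of its own beyond the remark that the proof ``passes as in \cite{78}'': you adapt the Lisica--Marde\v{s}i\'{c} associativity construction, correctly noting that both parenthesizations share the index function $\varphi\circ\psi\circ\chi$ (so the interpolating coherent homotopy can use that same function and remain special) and that fiber preservation is automatic because the interpolation only reparametrizes the $I\times\Delta^n$ coordinates. The only detail worth tightening is the remark about cofiniteness producing ``auxiliary indices'': here no new indices are needed since the homotopy is a pure reparametrization of the same component maps, and cofiniteness enters only to make the special coherent homotopy relation well behaved.
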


\begin{proposition}\label{2.4}
\textit{If $f:{\bf X}\to \bY$ is a special f.p. coherent map of inverse systems of ${\bf Top}_{{\bf B_0}}$
over cofinite index sets and $1_{{\bf X}}$ and $1_{\bY}$ are the f.p. coherent identity maps, then there exist special f.p. coherent homotopies connecting $f ~ 1_{{\bf X}}$ with $f$ and $1_{\bY} ~ f$ with $f$.}\qed
\end{proposition}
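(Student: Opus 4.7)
The plan is to adapt the standard Lisica-Marde\v{s}i\'{c} construction from \cite{78} to the fiber-preserving setting, producing the required coherent homotopies explicitly. Fiber preservation will be automatic throughout, since every ingredient---the components $f_{\bm{\beta}}$, the bonding maps $p_{\alpha\alpha'}$ and $q_{\beta\beta'}$, and the simplex reparametrizations used below---already respects the projections to $\Bo$, so the essential work is simplicial and combinatorial.

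First I would write out the composition formulas for $f ~ 1_{\bX}$ and $1_{\bY} ~ f$ using the composition rule for special coherent maps recalled in \cite{78}. For $f ~ 1_{\bX}$, the identity components $1_{\bm{\alpha}}(x,t)=p_{\alpha_0\alpha_n}(x)$ contribute only bonding maps $p_{\varphi(\beta_n)\varphi(\beta_n)}=1$, so that the composition differs from $f$ only by pasting copies of $f_{\bm{\beta}}$ along the standard nerve-subdivision of $\Delta^n$. For $1_{\bY} ~ f$, the identity components $q_{\beta_0\beta_n}$ appear as prefactors that, by condition ii) for $f$ (the $j=0$ face relation, iterated) already sit inside $f_{\bm{\beta}}$. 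In both cases the composition equals $f$ up to a well-defined simplicial reparametrization of $\Delta^n$.

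Second, for each $\bm{\beta}\in\kb^n$, I would use cofiniteness of $\kb$ to choose an index $\Phi(\bm{\beta})\in\ka$ with $\Phi(\bm{\beta})\geq\varphi(\bm{\beta})$ dominating the finitely many indices appearing in the composition formula, compatible with the choices already made at the faces $\bm{\beta}_j$. Then set
\[
F_{\bm{\beta}}(x,s,t)=f_{\bm{\beta}}\bigl(p_{\varphi(\bm{\beta})\Phi(\bm{\beta})}(x),\,\tau^{n}_{s}(t)\bigr),
\]
where $\tau^{n}_{s}:\Delta^n\to\Delta^n$, $s\in I$, is an affine path from the reparametrization $\tau^{n}_{0}$ that realises the composition to the identity $\tau^{n}_{1}=1_{\Delta^n}$. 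The family $\{\tau^{n}_{s}\}$ is chosen to intertwine with the face and degeneracy operators $\partial^{n}_{j},\sigma^{n}_{j}$, so that the simplicial identities i) and ii) required of a special f.p. coherent homotopy follow directly from the corresponding identities for $f$. Fiber preservation of $F_{\bm{\beta}}$ over $\Bo$ is immediate, since $\tau^{n}_{s}$ acts only on the simplex coordinate.

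The main obstacle will be the coherent choice of the reparametrizations $\tau^{n}_{s}$ across all dimensions $n\geq 0$ and all $\bm{\beta}\in\kb^{n}$, so that the face-degeneracy identities hold simultaneously for the entire family $\{F_{\bm{\beta}}\}$; this is precisely the combinatorial bookkeeping carried out in \cite{78}. Cofiniteness of $\kb$ is used in the standard way to choose $\Phi$ inductively as an increasing function compatible with the required bounds. Once the $\tau^{n}_{s}$ are fixed, verifying that $F$ restricts at $s=0$ to $f~1_{\bX}$ (respectively $1_{\bY}~f$) and at $s=1$ to $f$, and that $F$ is itself a special f.p. coherent map, reduces to direct simplicial checks; Proposition \ref{2.1} then yields the asserted homotopy equivalence.
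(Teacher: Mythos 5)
Your proposal follows essentially the same route as the paper, which gives no independent argument for Proposition \ref{2.4} beyond stating that the proof ``passes as in'' Lisica--Marde\v{s}i\'{c} \cite{78}: you reproduce that construction (rewrite the piecewise composition formula as $f_{\bm{\beta}}$ precomposed with a simplicial reparametrization, deform it affinely to the identity, and use cofiniteness to choose the index function inductively), and you correctly observe that fiber preservation is automatic because the deformation acts only on the $\Delta^{n}$ coordinate. No substantive discrepancy to report.
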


As in \cite{78} we can show that whenever the index set $\kb$ of $\bY$ is cofinite, then every f.p. coherent homotopy class $[f]:{\bf X}\to \bY$ of f.p. coherent maps $f:{\bf X}\to \bY$ contains a unique f.p. coherent homotopy class of special f.p. coherent maps. Consequently, in the cofinite case one can define composition of f.p. coherent homotopy classes by composing their special representatives.

Now define the following category. The f.p. coherent prohomotopy category ${\bf CPH}{\bf Top}_{{\bf B_0}}$ has as objects inverse systems ${\bf X}=((X_{\alpha},\pi_{X_{\alpha}}),p_{\alpha \alpha^{'}},\ka)$ of topological spaces over $\rm B_0 $ and f.p. maps over directed cofinite index sets. The morphisms are f.p. coherent homotopy classes $[f]:{\bf X}\to \bY$ of f.p. coherent maps $f:{\bf X}\to \bY$ of such systems. Composition is defined by composing representatives, which are special f.p. coherent maps. Identity morphism of ${\bf X}$ is the class, containing the coherent map $1_{{\bf X}}:{\bf X}\to {\bf X}$.

Now define the functor $\C:\bpro-{\bf Top}_{{\bf B_0}}\to {\bf CPH}{\bf Top}_{{\bf B_0}}$. Let $(f_{\beta},\varphi):{\bf X}\to \bY$ be a map of inverse systems. We associate with $(f_{\beta},\varphi)$ a f.p. coherent map $f:{\bf X}\to \bY$. For this aim we extend $\varphi:\kb\to \ka$ to a function $\varphi$ defined for all $\hm{\beta}=(\beta_0, \cdots ,\beta_{n})$ in such a way that 
\[\varphi(\hm{\beta})\geq \varphi(\beta_{j}), 0\leq j\leq n.\]

We use the method of induction. Let $n=1$ and $\hm{\beta}=(\beta_0,\beta_1)$. Note that 
\[f_{\beta_0} ~ p_{\varphi(\beta_0)\varphi(\beta_0,\beta_1)}=q_{\beta_0\beta_1}f_{\beta_1} ~p _{\varphi(\beta_1)\varphi(\beta_0,\beta_1)}.\]

Let $f_{\hm{\beta}}:(X_{\varphi(\hm{\beta})}\times \Delta^{n},\pi_{X_{\varphi(\hm{\beta})}\times \Delta^{n}})\to (Y_{\beta_{0}},\pi_{Y_{\beta_{0}}})$ a f.p. mapping defined by
\[f_{\hm{\beta}}(x,t)=f_{\beta_0}p_{\varphi(\beta_0)\varphi(\hm{\beta})}(x),~~~~~x\in X_{\varphi(\hm{\beta})}, t\in \Delta^{n}.\]

Also note that 
\[f_{\hm{\beta}}(x,\partial^{n}_{0}t)=f_{\beta_0}p_{\varphi(\beta_0)\varphi({\hm{\beta}})}(x)=q_{\beta_0\beta_1}f_{\beta_1}p_{\varphi(\beta_1)\varphi({\hm{\beta}})}(x)=q_{\beta_0\beta_1}f_{\hm{\beta_0}}(p_{\varphi(\hm{\beta_0})\varphi(\hm{\beta})}(x),t)\] 
and 
\[f_{\hm{\beta}}(x,\partial^{n}_{j}t)=f_{\beta_0}p_{\varphi(\beta_0)\varphi({\hm{\beta}})}(x)=f_{\hm{\beta_j}}(p_{\varphi(\hm{\beta_j})\varphi(\hm{\beta})}(x),t), ~~~0<j\leq n,\]
\[f_{\hm{\beta}}(p_{\varphi(\hm{\beta})\varphi(\hm{\beta^j})}(x),\sigma^{n}_{j}t)=f_{\beta_0}p_{\varphi(\beta_0)\varphi({\hm{\beta^{j}}})}(x)=f_{\hm{\beta^j}}(x,t), ~~~0\leq j\leq n.\]

Let $\varphi^{'}$ be another extension of $\varphi$. We obtain another f.p. coherent map $f^{'}$. Note that $f$ and $f^{'}$ are f.p. coherently homotopic.

Let $(f_{\beta},\varphi), (f^{'}_{\beta},\varphi^{'}):{\bf X}\to \bY$ are equivalent morphisms. As in \cite{78} we can show that the associated f.p. coherent maps $f$ and $f^{'}$ are connected by some f.p. coherent homotopy $F:{\bf X}\times I\to \bY$.

Thus, to every morphism of $\bff:{\bf X}\to \bY$ of $\bpro-{\bf Top}_{{\bf B_0}}$ we can associate a morphism $[f]=\C(\bff)$ of ${\bf CPH}{\bf Top}_{{\bf B_0}}$. If we restrict $\bpro-{\bf Top}_{{\bf B_0}}$ to inverse systems over cofinite index sets, then we have defined a functor $\C:\bpro-{\bf Top}_{{\bf B_0}}\to {\bf CPH}{\bf Top}_{{\bf B_0}}$.

By definition, 
\[\C(\bff)=[f], ~~~~\bff \in {\rm {Mor}}_{\bpro-{\bf Top}_{{\bf B_0}}}({\bf X},\bY),\]
\[\C({\bf X})={\bf X},~~~~\bX\in ob(\bpro-{\bf Top}_{{\bf B_0}}).\]

$\C(1_{\bY})$ is the f.p. coherent homotopy class of $1_{\bY}$. Let $\bff:{\bf X}\to \bY$ and $\bgg:\bY\to \bZ$ be morphism of $\bpro-{\bf Top}_{{\bf B_0}}$. As in \cite{78} we can prove that $\C(\bgg ~ \bff)=\C(\bgg) ~ \C(\bff).$

Besides, there exists a functor $\E:{\bf CPH}{\bf Top}_{{\bf B_0}}\to \bpro-{\bf HTop}_{{\bf B_0}}$. Assume that for each inverse system ${\bf X}=(X_{\alpha},p_{\alpha \alpha^{'}},\ka)$ in ${\bf Top}_{{\bf B_0}}$, $\E {\bf X}=(X_{\alpha},[p_{\alpha \alpha^{'}}]_{{\rm B_0} },\ka)$.

Let $f:{\bf X}\to \bY$ be a f.p. coherent map given by $f_{\hm{\beta}}$ and $\varphi$. We associate with $f$ the morphism $\bff:{\bf X}\to \bY$ of $\bpro-{\bf HTop}_{{\bf B_0}}$, given by function $\varphi_{|_{\kb}}:\kb\to \ka$ and the fiber homotopy classes over $\rm B_0 $,$[f_{\beta_0}]_{{\rm B_0} }:X_{\varphi(\beta_0)}\to Y_{\beta_0}$.

Note that $\bff$ is a morphism of $\bpro-{\bf HTop}_{{\bf B_0}}$. Indeed, for $\beta_0\leq \beta_1$ and $\alpha=\varphi(\beta_0,\beta_1)$ we have $\alpha\geq \varphi(\beta_0),\varphi(\beta_1)$. Besides, the f.p. map $f_{\beta_0\beta_1}:(X_{\alpha}\times \Delta^{1},\pi_{X_{\alpha}\times \Delta^{1}})\to (Y_{\beta_0},\pi_{Y_{\beta_0}})$ satisfies the conditions
\[f_{\beta_0\beta_1}(x,\partial^{1}_{0}(1))=q_{\beta_0\beta_1}f_{\beta_1}(p_{\varphi(\beta_1)\alpha}(x),1)\]
and
\[f_{\beta_0\beta_1}(x,\partial^{1}_{1}(1))=f_{\beta_0}(p_{\varphi(\beta_0)\alpha}(x),1).\]

Thus, 
\[[f_{\beta_0}]_{{\rm B_0} } ~ [p_{\varphi(\beta_0)\alpha}]_{{\rm B_0} }=[q_{\beta_0\beta_1}]_{{\rm B_0} } ~ [f_{\beta_1}]_{{\rm B_0} } ~ [p_{\varphi(\beta_1)\alpha}]_{{\rm B_0} }.\]

Let $f,f^{'}:{\bf X}\to \bY$ be f.p. coherent homotopic maps. Let $F:{\bf X}\times I\to \bY$ be a f.p. coherent homotopy between $f$ and $f^{'}$, given by $\Phi$ and $F_{\hm{\beta}}$. Note that $\Phi(\beta_0)\geq \varphi(\beta_0),\varphi^{'}(\beta_0)$ and $F_{\beta_0}:X_{\Phi(\beta_0)\times I\times \Delta^{0}}\to Y_{\beta_0}$ is a f.p. map satisfying conditions
\[F_{\beta_0}(x,0,1)=f_{\beta_0}(p_{\varphi(\beta_0)\Phi(\beta_0)}(x),1)\]
and
\[F_{\beta_0}(x,1,1)=f^{'}_{\beta_0}(p_{\varphi^{'}(\beta_0)\Phi(\beta_0)}(x),1).\]

Consequently,
\[[f_{\beta_0}]_{{\rm B_0} } ~ [p_{\varphi(\beta_0)\Phi(\beta_0)}]_{{\rm B_0} }=[f^{'}_{\beta_0}]_{{\rm B_0} } ~ [p_{\varphi^{'}(\beta_0)\Phi(\beta_0)}]_{{\rm B_0} }.\]

Thus, with $f$ and with $f^{'}$ is associated the same morphism of $\bpro-{\bf HTop}_{{\bf B_0}}$. Consequently, it is possible to define a functor $\E:{\bf CPH}{\bf Top}_{{\bf B_0}}\to \bpro-{\bf HTop}_{{\bf B_0}}$.

The composition $\E \circ \C:\bpro-{\bf Top}_{{\bf B_0}}\to \bpro-{\bf HTop}_{{\bf B_0}}$ is the functor induced by the f.p. homotopy functor $H:{\bf Top}_{{\bf B_0}}\to {\bf HTop}_{{\bf B_0}}$.

A f.p. coherent map $f:X\to \bY$ consists of f.p. maps $f_{\hm{\beta}}:(X\times \Delta^{n},\pi_{X\times \Delta^{n}})\to (Y_{\beta_0},\pi_{Y_{\beta_0}})$,$\hm{\beta}=(\beta_0,  \cdots, \beta_n)\in \kb$, $n\geq 0$, satisfying the following conditions: 
for each $x\in X$, $t\in \Delta^{n-1},n>0$

\begin{equation*}
f_{\hm{\beta}}(x,\partial^{n}_{j}t)=\begin{cases}
q_{\beta_0\beta_1}f_{\hm{\beta_0}}(x,t),~~~j=0,\\
f_{\hm{\beta_{j}}}(x,t),~~~~~~~~~0<j\leq n
\end{cases}
\end{equation*}
and for each $x\in X$, $t\in \Delta^{n+1},n\geq0$
\[f_{\hm{\beta}}(x,\sigma^{n}_{j}t)=
f_{\hm{\beta^{j}}}(x,t),~~~0\leq j\leq n.\]

Note that a f.p. coherent map $f:X\to \bY$ is always a special f.p. coherent map.

A f.p. coherent homotopy $F:X\times I\to \bY$, connecting $f$ and $f^{'}$, is a f.p. coherent map given by $F_{\hm{\beta}}$ and satisfying the conditions: 
for each $x\in X, t\in \Delta^{n}$
\[F_{\hm{\beta}}(x,0,t)=f_{{\hm{\beta}}}(x,t)\]
and 
\[F_{\hm{\beta}}(x,1,t)=f^{'}_{{\hm{\beta}}}(x,t).\]

Let $\bp=(p_{\alpha}):X\to {\bf X}$ be a morphism of $\bpro-{\bf Top}_{{\bf B_0}}$. It is clear that with $\bp$ is associated a unique f.p. coherent map $p:X\to {\bf X}$ given by formula
\[p_{\hm{\alpha}}(x,t)=p_{\alpha_0}(x),\]
where $\hm{\alpha}=(\alpha_0, \cdots,\alpha_n)\in \ka^{n}, x\in X, t\in \Delta^{n}$.

The objects of category ${\bf SSH}_{{\bf B_0}}$ are all topological spaces over $\rm B_0 $. The morphisms of category ${\bf SSH}_{{\bf B_0}}$ are defined by  the following way.

Let $\bp:X\to {\bf X}$ and $\bq:Y\to \bY$ be an ${\rm {ANR}}_{{\rm B_0} }$-resolutions of $X$ and $Y$, respectively. Let $[f]:{\bf X}\to \bY$ be a some morphism of category ${\bf CPH}{\bf Top}_{{\bf B_0}}$. Let $\bp^{'}:X\to {\bf X}^{'}$, $\bq^{'}:Y\to \bY^{'}$,$[f^{'}]:{\bf X}^{'}\to \bY^{'}$ be another triple of fiber resolutions of spaces $X$ and $Y$ over $\rm B_0 $ and morphism of category ${\bf CPH} {\bf Top}_{{\bf B_0}}$.

Now define the following equivalence relation. We say the triples ($\bp, \bq, [f]$) and ($\bp^{'}, \bq^{'}, [f^{'}]$) are equivalent if 
\[[f^{'}] ~ [i]=[j] ~ [f],\]
where $[i]:\bX \to {\bf X}^{'}$ and $[j]:\bY \to \bY^{'}$ are isomorphisms of category ${\bf CPH}{\bf Top}_{{\bf B_0}}$.

The fiber strong shape morphisms $\F:(X,\pi_{X})\to (Y,\pi_{Y})$ are the equivalence classes of triples ($\bp, \bq, [f]$) with respect to the above defined relation $\sim$. 

Let $F:(X,\pi_{X})\to (Y,\pi_{Y})$ and $G:(Y,\pi_{Y})\to (Z,\pi_{Z})$ be the fiber strong shape morphisms, defined by triples ($\bp, \bq, [f]$) and ($\bp^{'}, \bq^{'}, [g]$), where $\bp^{'}:(Y,\pi_Y)\to \bY^{'}$, $\bq^{'}:(Z,\pi_{Z})\to \bZ$ and $[g]:\bY^{'}\to \bZ$.

As we know there exists an unique morphism $[h]:\bY\to \bY^{'}$ of category ${\bf CPH}{\bf Top}_{{\bf B_0}}$ such that $[h] ~ [q]=[q^{'}]$. Note that 
\[[j][q]=[q^{'}]=[h] ~ [q].\]
Hence, $[j]=[h]$. Besides, $[g] ~ [j]=[g] ~ [h] ~[1_{\bZ}]$.

Thus, we can assume that the morphisms $F$ and $G$ are given by triples ($\bp, \bq, [f]$) and ($\bq, \br, [g]$).

Consequently, we can define the composition $G ~ F:X\to Z$ as the morphism given by triple ($\bp, \br, [g] ~ [f]$).

In the role an identity morphism $\ki:X\to X$ we can take the morphism defined by triple ($\bp, \bp, [1_{X}]$).

The obtained category ${\bf SSH}_{{\bf B_0}}$ call the fiber strong shape category.

Let $X\in ob({\bf SSH}_{{\bf B_0}})$. By symbol $\pssh_{{\rm B_0} }(X)$ denote the equivalence class of topological space $(X,\pi_{X})$ and call the fiber strong shape of $(X,\pi_{X})$.

For each f.p. map $\varphi:(X,\pi_{X})\to (Y,\pi_{Y})$ choose ${\rm {ANR}}_{{\rm B_0} }$-resolutions $\bp:(X,\pi_{X})\to {\bf X}$ and $\bq:(Y,\pi_{Y})\to \bY$. There exists a unique morphism $[f]:{\bf X}\to \bY$ of category ${\bf CPH}{\bf Top}_{{\bf B_0}}$ such that $[q] ~ [\varphi]=[f] ~ [p]$.

We can define a functor $\Ss^{'}_{{\rm B_0} }:{\bf Top}_{{\bf B_0}}\to {\bf SSH}_{{\bf B_0}}$. By definition,
\[\Ss^{'}(X)=X,~~~~~X\in ob({\bf Top}_{{\bf B_0}})\]
and 
\[\Ss^{'}(\varphi)=\Phi,~~~~~\varphi\in {\rm {Mor}}_{{\bf Top}_{{\bf B_0}}}(X,Y).\]

Here $\Phi$ is a fiber strong shape morphism defined by triple ($\bp, \bq, [f]$).

As in \cite{78} we can prove that functor $\Ss^{'}_{{\rm B_0} }$ induces a functor $\Ss_{{\rm B_0} }:{\bf HTop}_{{\bf B_0}}\to {\bf SSH}_{{\bf B_0}}$, which we call the fiber strong shape functor. By definition,
\[\Ss_{{\rm B_0} }(X)=X, X\in ob({\bf HTop}_{{\bf B_0}})\]
and
\[\Ss_{{\rm B_0} }([\varphi]_{{\rm B_0} })=\Ss^{'}(\varphi), [\varphi]_{{\rm B_0} }\in {\rm {Mor}}_{{\bf HTop}_{{\bf B_0}}}(X,Y).\]

Let us define a functor $\SSS:{\bf SSH}_{{\bf B_0}}\to \bsh_{{\bf B_0}}$. Assume that $\SSS(X)=X$ for each object $X\in ob({\bf SSH}_{{\bf B_0}})$. Let $F:(X,\pi_{X})\to (Y,\pi_{Y})$ be a fiber strong shape morphism given by a triple ($\bp, \bq, [f]$).

Consider the morphism $\E([f])$ as an image of $[f]$ with respect the functor $\E:{\bf CPH}{\bf Top}_{{\bf B_0}}\to \bpro-{\bf HTop}_{{\bf B_0}}$. The triple ($\HH\bp, \HH\bq, \E[f]$) generates a fiber shape morphism, which we denote by $\SSS(F):(X,\pi_{X})\to (Y,\pi_{Y})$.

Now we can formulate the following 
\begin{theorem}\label{theorem 2.3}
There exists a commutative diagram
\begin{center}
 \begin{tikzpicture}
\node (A) {};
\node (B) [node distance=6cm, right of=A] {$\bsh_{{\bf B_0}}$};
\node (C) [node distance=1cm, below of=A] {};
\node (D) [node distance=3cm, right of=C] {${\bf HTop}_{{\bf B_0}}$};
\node (E) [node distance=1cm, below of=C] {};
\node (F) [node distance=6cm, right of=E] {${\bf SSH}_{{\bf B_0}},$};
\draw[->] (D) to node [above left]{$\SSS_{{\rm B_0} }$} (B);
\draw[->] (F) to node [right]{$\SSS$} (B);
\draw[->] (D) to node [below left]{$\Ss_{{\rm B_0} }$} (F);
\end{tikzpicture}
 \end{center}
where $\SSS_{{\rm B_0} }$ is V.Baladze fiber shape functor $\Bci$.\qed
\end{theorem}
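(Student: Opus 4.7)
The plan is to verify the identity $\SSS \circ \Ss_{\Bo} = \SSS_{\Bo}$ separately on objects and on morphisms. On objects all three functors act as the identity on the underlying space over $\Bo$, so only the action on morphisms requires analysis.

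Fix a morphism $[\varphi]_{\Bo}\colon X\to Y$ of $\bHTop_{\Bo}$ and choose $\ANR_{\Bo}$-resolutions $\bp\colon X\to \bX$ and $\bq\colon Y\to \bY$; these exist by Theorem \ref{theorem 1.2}, and by Theorem \ref{theorem 1.5} they are in fact strong $\ANR_{\Bo}$-expansions. The universal property of strong expansions, applied to the f.p.\ composite $q\circ\varphi\colon X\to \bY$, yields a unique class $[f]\colon\bX\to\bY$ in $\bcph\bTop_{\Bo}$ with $[f]\circ[p]=[q]\circ[\varphi]$. By the construction of $\Ss_{\Bo}$ recalled just before the theorem, $\Ss_{\Bo}([\varphi]_{\Bo})$ is represented by the triple $(\bp,\bq,[f])$; applying the definition of $\SSS$, the fiber shape morphism $\SSS(\Ss_{\Bo}([\varphi]_{\Bo}))$ is then represented by the triple $(\HH\bp,\HH\bq,\E([f]))$. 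On the other hand, by V.\,Baladze's construction of $\SSS_{\Bo}$ (the same resolutions may be used as $\ANR_{\Bo}$-expansions via Corollary \ref{corollary 1.6}), $\SSS_{\Bo}([\varphi]_{\Bo})$ is the fiber shape class of $(\HH\bp,\HH\bq,\bff)$, where $\bff\colon\HH\bX\to\HH\bY$ is the unique morphism of $\bpro$-$\bHTop_{\Bo}$ with $\bff\circ\HH\bp=\HH\bq\circ[\varphi]$. Commutativity of the diagram therefore reduces to the single equality $\E([f])=\bff$ in $\bpro$-$\bHTop_{\Bo}$.

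To establish this equality, I would apply the functor $\E$ to the defining equation $[f]\circ[p]=[q]\circ[\varphi]$ and invoke the fact, recorded earlier in this section, that $\E\circ\C$ coincides with the functor induced on $\bpro$-$\bTop_{\Bo}$ by the fiber homotopy functor; this yields $\E([f])\circ\HH\bp=\HH\bq\circ[\varphi]$, so $\E([f])$ satisfies the same universal equation in $\bpro$-$\bHTop_{\Bo}$ as $\bff$, and uniqueness forces $\E([f])=\bff$.

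The step demanding the most care, and which I expect to be the principal obstacle, is checking that the comparison is independent of the chosen $\ANR_{\Bo}$-resolutions, since morphisms in both $\bssh_{\Bo}$ and $\bsh_{\Bo}$ are equivalence classes of triples. Concretely, one must verify that $\E$ carries the isomorphisms of $\bcph\bTop_{\Bo}$ appearing in the equivalence relation on $\bssh$-triples to isomorphisms in $\bpro$-$\bHTop_{\Bo}$, so that the equivalence relation used for morphisms in $\bssh_{\Bo}$ descends, via $\SSS$, to the equivalence relation used in $\bsh_{\Bo}$. This is essentially the same well-definedness statement implicit in the preceding definition of $\SSS$, and once it is in hand the three paragraphs above combine to give the commutativity $\SSS\circ\Ss_{\Bo}=\SSS_{\Bo}$.
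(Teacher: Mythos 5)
Your proposal is correct and matches the argument the paper intends: the theorem is stated with no written proof, being an immediate unfolding of the definitions of $\Ss_{{\rm B_0}}$, $\SSS$, and the observation that $\E\circ\C$ is the functor induced by the fiber homotopy functor, together with the uniqueness clause in the universal property of (strong) $\ANR_{\Bo}$-expansions. Your reduction to $\E([f])=\bff$ via applying $\E$ to $[f][p]=[q][\varphi]$, and your flagging of independence from the chosen resolutions, are exactly the steps the paper leaves implicit.
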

\begin{corollary}\label{corollary}
Let $(X,\pi_{X})$ and $(Y,\pi_{Y})$ be topological spaces over $\rm B_0 $. If $\ssshh_{{\rm B_0} }(X)=\ssshh_{{\rm B_0} }(Y)$, then $\sh_{{\rm B_0} }(X)=\sh_{{\rm B_0} }(Y)$.\qed
\end{corollary}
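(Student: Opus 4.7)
The plan is to check that $\SSS$ is a well-defined functor and then verify the commutativity $\SSS \circ \Ss_{\Bo} = \SSS_{\Bo}$ on morphisms (all three functors act as the identity on objects). Well-definedness of $\SSS$ comes first: if two triples $(\bp, \bq, [f])$ and $(\bp', \bq', [f'])$ represent the same fiber strong shape morphism via isomorphisms $[i]:\bX \to \bX'$, $[j]:\bY \to \bY'$ in $\bcph\bTop_{\bBo}$ with $[f'] \cdot [i] = [j] \cdot [f]$, then applying the functor $\E:\bcph\bTop_{\bBo} \to \bpro\text{-}\bHTop_{\bBo}$ yields $\E[f'] \cdot \E[i] = \E[j] \cdot \E[f]$, with $\E[i], \E[j]$ still isomorphisms; this is precisely the equivalence relation used to define fiber shape morphisms from such triples. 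Functoriality (composition and identity) then follows since $\E$ is a functor and composition in $\bssh_{\bBo}$ is defined by composing $\bcph\bTop_{\bBo}$-representatives after the canonical identification of the middle resolutions.

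The core content is the identity $\SSS \circ \Ss_{\Bo}([\varphi]_{\Bo}) = \SSS_{\Bo}([\varphi]_{\Bo})$ for a fiber preserving homotopy class $[\varphi]_{\Bo}:X \to Y$. I fix $\ANR_{\Bo}$-resolutions $\bp:X \to \bX$, $\bq:Y \to \bY$, which by Theorem~\ref{theorem 1.5} are in particular $\ANR_{\Bo}$-expansions. Unwinding definitions, $\Ss_{\Bo}([\varphi]_{\Bo})$ is the fiber strong shape morphism $\Phi$ represented by $(\bp, \bq, [f])$, where $[f]:\bX \to \bY$ is the unique $\bcph\bTop_{\bBo}$-morphism with $[q] \cdot [\varphi] = [f] \cdot [p]$; $\SSS(\Phi)$ is the fiber shape morphism represented by $(\HH\bp, \HH\bq, \E[f])$; and $\SSS_{\Bo}([\varphi]_{\Bo})$ is by construction the fiber shape morphism represented by $(\HH\bp, \HH\bq, \bff)$, where $\bff$ is the unique $\bpro\text{-}\bHTop_{\bBo}$-morphism with $\HH(\bq) \cdot \HH(\varphi) = \bff \cdot \HH(\bp)$. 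Thus it suffices to show the equality $\E[f] = \bff$.

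To do this, apply $\E$ to the defining relation $[q] \cdot [\varphi] = [f] \cdot [p]$. By functoriality of $\E$, together with the earlier observation that $\E \circ \C$ is the functor induced by the fiber preserving homotopy functor $\HH:\bTop_{\bBo} \to \bHTop_{\bBo}$, this becomes $\HH(\bq) \cdot \HH(\varphi) = \E[f] \cdot \HH(\bp)$ in $\bpro\text{-}\bHTop_{\bBo}$. The uniqueness half of conditions $\E_{\Bo}1)$ and $\E_{\Bo}2)$ applied to the $\ANR_{\Bo}$-expansions $\HH\bp$ and $\HH\bq$, which is precisely what characterizes $\bff$ in Baladze's fiber shape construction, then forces $\E[f] = \bff$.

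The main obstacle is organizational rather than technical: one must carefully verify that the coherent map canonically associated to a morphism $\bp$ of $\bpro\text{-}\bTop_{\bBo}$ is mapped by $\E$ to $\HH\bp$ itself, so that the defining equations of $[f]$ in $\bcph\bTop_{\bBo}$ and of $\bff$ in $\bpro\text{-}\bHTop_{\bBo}$ do correspond under $\E$. Once this compatibility is in place, the theorem is formal, and Corollary~\ref{corollary} is immediate: any fiber strong shape equivalence $X \simeq Y$ in $\bssh_{\bBo}$ is carried by the functor $\SSS$ to a fiber shape equivalence in $\bsh_{\bBo}$, so $\ssshh_{\Bo}(X) = \ssshh_{\Bo}(Y)$ implies $\sh_{\Bo}(X) = \sh_{\Bo}(Y)$.
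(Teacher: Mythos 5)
Your proposal is correct and follows the same route as the paper: the corollary is an immediate consequence of Theorem~\ref{theorem 2.3}, since the functor $\SSS:{\bf SSH}_{{\bf B_0}}\to \bsh_{{\bf B_0}}$ is the identity on objects and, like any functor, carries isomorphisms to isomorphisms. The additional material you supply (well-definedness of $\SSS$ via the functor $\E$ and the verification that $\E[f]$ coincides with the morphism $\bff$ characterized by the expansion conditions) is exactly the argument the paper leaves implicit for the theorem itself, and it is sound.
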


\begin{remark}\label{remark 2.7}
Using the methods developed in this paper and papers ([B$_6$], [L-M],[M$_1$], [M$_2$]) it is possible to construct fiber strong shape theory for category of arbitrary continuous maps.
\end{remark}

\end{document}